\documentclass[12pt,reqno]{amsart}
\usepackage{amsmath,amssymb}

\usepackage{hyperref}

\usepackage{graphicx,psfrag}

\newcommand{\vectornorm}[1]{\left|\left|#1\right|\right|}
\newcommand{\R}{\mathbb{R}}

\newcommand{\varep}{\varepsilon}

\newtheorem{theorem}{Theorem}[section]
\newtheorem{lemma}[theorem]{Lemma}

\newtheorem{proposition}[theorem]{Proposition}

\newtheorem{definition}[theorem]{Definition}

\numberwithin{equation}{section}

\begin{document}

\title[Uniqueness and stability of saddle-shaped solutions]
{Uniqueness and stability of saddle-shaped solutions to the Allen-Cahn equation}

\author{Xavier Cabr{\'e}}
\thanks{The author was supported by
grants MTM2008-06349-C03-01 (Spain) and 2009SGR-345 (Catalunya)}

\address{ICREA and Universitat Polit{\`e}cnica de Catalunya,
Departament de Matem{\`a}tica Aplicada I, Diagonal 647, 08028
Barcelona, Spain}
\email{xavier.cabre@upc.edu}

\begin{abstract}
We establish the uniqueness of a saddle-shaped 
solution to the diffusion equation $-\Delta u = f(u)$ 
in all of $\mathbb{R}^{2m}$, where $f$ is of bistable type, in every even dimension
$2m \geq 2$. In addition, we prove its stability whenever $2m \geq 14$. 

Saddle-shaped solutions are odd with respect to the Simons cone 
${\mathcal C} = \{ (x^1,x^2) \in \mathbb{R}^m \times \mathbb{R}^m : |x^1|=|x^2| \}$ 
and exist in all even dimensions. Their uniqueness was only known 
when $2m=2$. On the other hand, they are known to be unstable in 
dimensions 2, 4, and~6. Their stability in dimensions 8, 10, and 12 remains an open question.
In addition, since the Simons cone minimizes area when $2m \geq 8$,
saddle-shaped solutions are expected to be global minimizers when $2m \geq 8$,
or at least in higher dimensions. This is a property stronger than stability which is
not yet established in any dimension.
\end{abstract}

\maketitle

\section{Introduction}

This paper concerns saddle-shaped solutions
to bistable diffusion equations
\begin{equation}\label{eq}
-\Delta u=f(u)\quad {\rm in }\,\R^{2m},
\end{equation}
where $2m$ is an even integer. Throughout the paper we assume that $f$ is a $C^{2,\alpha}$
function on $[-1,1]$, for some $\alpha\in (0,1)$, such that
\begin{equation}\label{hypf}
f \text{ is odd, }\; f(0)=f(1)=0,\text{ and }
f''<0 \text{ in } (0,1).
\end{equation}
As a consequence we have $f>0$ in $(0,1)$. Under these assumptions, we say that
$f$ is of bistable type. A typical example is the nonlinearity
$f(u)=u-u^3$ in the Allen-Cahn equation.

For $x=(x_1,\dots, x_{2m})\in\R^{2m}$, consider the two
radial variables
\begin{equation*}\label{coor}
\left\{\begin{array}{rcl} 
s & = & (x_1^2+...+x_m^2)^{1/2}\\
\vspace{-3mm}\\
t & = & (x_{m+1}^2+...+x_{2m}^2)^{1/2}.
\end{array}
\right.
\end{equation*}
A {\it saddle-shaped solution} of \eqref{eq} is a solution $u$ of \eqref{eq} which
depends only on $s$ and $t$, satisfies $|u|<1$, is positive in $\{s>t\}$,
and is odd with respect to  $\{s=t\}$, i.e., $u(t,s)=-u(s,t)$.

Consider also the variables
\begin{equation*}\label{defyz}
\left\{\begin{array}{rcl}
y & = & (s+t)/\sqrt{2}\\
z & = & (s-t)/\sqrt{2},
\end{array}\right.
\end{equation*}
which satisfy $y\geq 0$ and $-y\leq z\leq y$.
An important set in what follows is the
Simons cone (see Figure \ref{fig1}), defined by 
$$
{\mathcal C}=\{s=t\}=\{z=0\}=\partial {\mathcal O},
$$
where
$$
{\mathcal O}=\{s>t\}=\{z>0\}\subset\R^{2m}.
$$ 
The cone ${\mathcal C}$ has zero mean curvature at every
$x\in{\mathcal C}\backslash\{0\}$, in every dimension $2m\geq 2$.
However, it is only in dimensions $2m\geq 8$ that ${\mathcal C}$ is in addition a
minimizer of the area functional. Furthermore,  ${\mathcal C}$ is stable
only in these same dimensions; see \cite{CT2} and references therein.
We will see that similar properties hold, or are expected to hold,
for saddle-shaped solutions of \eqref{eq}.

\begin{figure}[htp]
	\begin{center}
		\psfrag{t}{$t \ge 0$}
		\psfrag{s}{$s \ge 0$}
		\psfrag{dy}{$\partial_y$}
		\psfrag{dz}{$\partial_z$}
		\psfrag{C}{${\mathcal C}=\{s=t\}=\{z=0\}$}
		\psfrag{O}{${\mathcal O}=\{s>t\}=\{z>0\}$}
		\includegraphics[width=4cm]{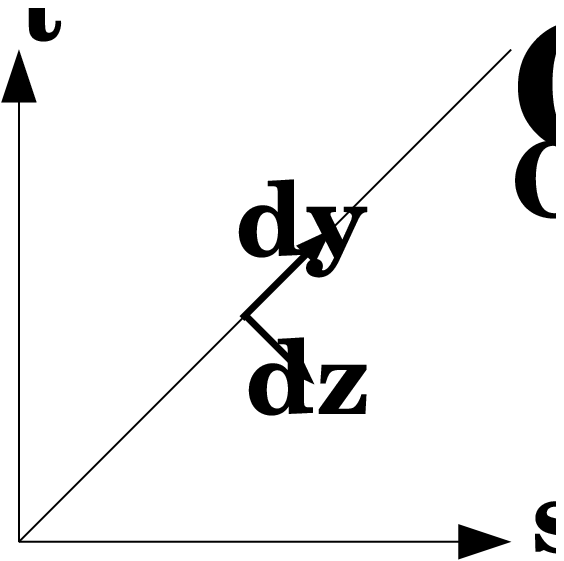}
	\end{center}
	\caption{The Simons cone ${\mathcal C}$. The $(s,t)$ and $(y,z)$ variables}
	\label{fig1}
\end{figure}

Under our assumptions \eqref{hypf} on $f$, there exists a unique increasing solution of
$-\Delta u=f(u)$ in all of $\R$ up to translations of the
independent variable; see, e.g., Lemma~4.3 of \cite{CT1}. It has limits $\pm 1$ at $\pm\infty$. 
We normalize it to vanish at the origin and
we call it~$u_0$. Thus, we have
\begin{equation*}\label{u0}
\left\{
\begin{array}{l}
u_0:\R\rightarrow (-1,1),\; -\ddot{u}_0=f(u_0) \; \text{ in } \R ,\\
u_0(0)=0,\; \dot{u}_0>0 \ \text{ in } \R, \;\text{ and }
\; u_0(\tau)\to\pm 1  \ \text{ as }\tau\to\pm\infty.
 \end{array}
 \right.
\end{equation*}
For the Allen-Cahn nonlinearity $f(u)=u-u^3$, 
the solution $u_0$ can be computed explicitly and it is given by 
$u_0(\tau)=\tanh(\tau/\sqrt{2})$.

It is simple to check that $|z|$ is the distance in $\R^{2m}$ from any
point $x\in \R^{2m}$ to the Simons cone ${\mathcal C}$; see Lemma~4.2 of \cite{CT1}. 
This is important when showing that the function
\begin{equation}\label{defU}
U(x):=u_0\left( \frac{s-t}{\sqrt{2}}\right)=u_0(z) \qquad\text{for }x\in\R^{2m}
\end{equation}
describes the asymptotic behavior
of saddle-shaped solutions at infinity.
This was established by J.~Terra and the author \cite{CT2} and it is
stated in Theorem~\ref{exis-uns} below.
Note that $U$ is a Lipschitz function in $\R^{2m}$, but it is not differentiable
at $\{st=0\}$. Using \eqref{eqst}, which is equation \eqref{eq} expressed in the 
$(s,t)$ variables, one sees that $-\Delta U>f(U)$
in $\{s>t>0\}$, i.e., $U$  is a strict supersolution 
in $\{s>t>0\}$ ---a fact that we will not use in this paper.

The energy functional associated to equation \eqref{eq} is
\begin{equation}\label{energia}
{\mathcal E}(u,\Omega):=\int_\Omega \left\{ \frac{1}{2} |\nabla u|^2
+G(u)\right\} dx,\qquad\text{where } \, G'=-f.
\end{equation}
We say that a bounded solution $u$ of
\eqref{eq} is {\it stable} if the second variation of energy
$\delta^2{\mathcal E}/\delta^2\xi$ with respect to compactly
supported smooth perturbations $\xi$ is nonnegative. That is, if
\begin{equation}\label{stable}
Q_u(\xi):=\int_{{\R}^{2m}}\left\{
|\nabla\xi|^2-f'(u)\xi^2\right\}dx\geq 0 \quad {\rm for \, all}
\;\xi\in C^\infty_c(\R^{2m}).
\end{equation}
We say that $u$ is {\it unstable} when $u$ is not stable.
The stability of $u$ is equivalent to requiring the linearized
operator $-\Delta -f'(u)$ to have positive first eigenvalue (or to
satisfy the maximum principle) in every smooth bounded domain
of $\R^{2m}$; see section~2 for these questions.

A bounded function $u\in C^1(\R^{2m})$ is said to be a {\it global minimizer} of \eqref{eq}
when ${\mathcal E}(u,\Omega)\leq {\mathcal E}(v,\Omega)$ for every bounded
domain $\Omega$ and function $v\in C^1(\overline\Omega)$ such that $v\equiv u$ on 
$\partial \Omega$. Clearly, every global minimizer is a stable solution.

The following results were proven in \cite{CT1,CT2} by J.~Terra and the author. 

\begin{theorem}[\textbf{Cabr\'e-Terra \cite{CT1,CT2}}]\label{exis-uns} 
Assume that $f$ satisfies \eqref{hypf}.

{\rm (a)} For every even dimension  $2m\geq 2$,
there exists a saddle-shaped solution $u\in C^2(\R^{2m})$ 
of $-\Delta u= f(u)$ in~$\R^{2m}$, that is, a solution $u$ of this equation
with $|u|<1$ and such that 
\begin{itemize}
\item $u$ depends only on the variables $s$ and $t$. We write
$u=u(s,t)$; 
\item $u>0$ in ${\mathcal O}=\{s>t\}$; 
\item  $u(t,s)=-u(s,t)$ in $\R^{2m}$.
\end{itemize}

{\rm (b)} For $2m\geq 2$, every saddle-shaped solution $u$ satisfies
\begin{equation}
\label{unif}
\vectornorm{\, |u-U| + |\nabla (u-U)|\, }_{L^{\infty}(\R^{2m}\setminus B_R(0))} 
\rightarrow 0\quad
\text{as }\, R\rightarrow\infty ,
\end{equation}
where $U$ is defined by \eqref{defU}.

{\rm (c)} When $2\leq 2m\leq 6$, every saddle-shaped solution
$u$ is unstable. Furthermore, in dimensions $4$ and $6$ every saddle-shaped solution
has infinite Morse index in the sense of Definition~{\rm 1.8} of \cite{CT2}.
\end{theorem}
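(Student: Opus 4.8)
\medskip
\noindent\emph{Proof strategy.}\ \emph{Part (a).} Writing \eqref{eq} for functions depending only on $s$ and $t$ turns it into the degenerate elliptic equation
\[
u_{ss}+u_{tt}+\frac{m-1}{s}\,u_s+\frac{m-1}{t}\,u_t+f(u)=0
\]
on the quarter-plane $\{s\ge0,\ t\ge0\}$. The plan is to construct $u$ first in $\mathcal O=\{s>t\}$ and then reflect. In each ball $B_R$ I would minimise the energy $\mathcal E(\cdot,\mathcal O\cap B_R)$ of \eqref{energia} among $H^1$ functions of $(s,t)$ that equal $U$ on $\partial(\mathcal O\cap B_R)$; since $G$ is bounded below a minimiser $u_R$ exists, is a critical point by symmetric criticality, and may be taken with $|u_R|\le1$ because $\pm1$ solve the equation. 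As recalled in the introduction $U=u_0(z)$ is positive in $\mathcal O$ and satisfies $-\Delta U>f(U)$ there, so the maximum principle gives $0\le u_R\le U$ in $\mathcal O\cap B_R$; since $u_R=U>0$ on $\partial B_R\cap\mathcal O$, the strong maximum principle upgrades this to $u_R>0$ in $\mathcal O\cap B_R$. Letting $R\to\infty$ with interior elliptic estimates produces a solution $u>0$ in $\mathcal O$, vanishing on $\mathcal C$, with $|u|<1$ strict (apply the strong maximum principle to $1\mp u$). Extending $u$ oddly across $\{s=t\}$ --- legitimate because the coefficients above are invariant under $s\leftrightarrow t$ and $f$ is odd --- and lifting to $\R^{2m}$ yields the saddle-shaped solution. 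The one delicate point is the $C^2$ (indeed $C^{2,\alpha}$) regularity of the reflected function at the vertex of $\mathcal C$ and across $\{st=0\}$, which I would settle using the $O(m)\times O(m)$ radial structure together with standard elliptic regularity.

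\medskip
\noindent\emph{Part (b).} I would argue by a compactness/translation argument, upgraded to uniformity by barriers. If \eqref{unif} failed, there would be $x_n\to\infty$ with $|u(x_n)-U(x_n)|\ge\delta>0$; after extracting a subsequence the translates $u(\cdot+x_n)$ converge in $C^2_{\mathrm{loc}}$ to an entire solution $u_\infty$ with $|u_\infty|\le1$. If $\mathrm{dist}(x_n,\mathcal C)$ stays bounded, the translated cone $\mathcal C-x_n$ converges to a hyperplane; by one-dimensional symmetry of monotone odd entire solutions, $u_\infty$ equals $u_0$ of the signed distance to that hyperplane, which is also $\lim U(\cdot+x_n)$ --- a contradiction. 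If instead $\mathrm{dist}(x_n,\mathcal C)\to\infty$, so that $U(x_n)\to\pm1$, a comparison with a positive subsolution of $-\Delta v=f(v)$ in $\mathcal O$ tending to $1$ forces $u(x_n)\to\pm1$, again a contradiction; this is where the strict-supersolution property of $U$ is used. Once $u-U\to0$ uniformly outside large balls, the convergence of the gradients in \eqref{unif} follows from interior Schauder estimates for $u-U$ on unit balls centred at points tending to infinity.

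\medskip
\noindent\emph{Part (c).} To prove instability for $2m\le6$ I would exhibit a compactly supported $\xi$ with $Q_u(\xi)<0$ in \eqref{stable}; equivalently (by a classical characterisation), one shows $L=-\Delta-f'(u)$ admits no positive supersolution on $\R^{2m}$. The building blocks are the profile derivative $\dot u_0(z)$, which is the positive (hence ground-state) zero-eigenvalue solution of $-\ddot w=f'(u_0)w$ on $\R$, and the Jacobi fields coming from the rotations of $\R^{2m}$ that mix the two groups of variables (for instance $x_1\partial_{x_{m+1}}u-x_{m+1}\partial_{x_1}u$), which solve $L(\cdot)=0$ and encode the geometry of $\mathcal C$. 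Using part (b) to replace $f'(u)$ by $f'(u_0(z))$ at infinity, the form $Q_u$ is controlled, on test functions adapted to the cone, by a Hardy-type inequality on $\mathcal C$ with a potential decaying like $|A_{\mathcal C}|^2=2(m-1)/|x|^2$; this inequality fails in the way needed to produce a negative direction precisely when $2m\le6$ (the case $2m=2$, where $\mathcal C$ is the singular cross, being handled by a direct construction of a negative direction). Finally, since the obstruction scales like $|x|^{-2}$ it is essentially scale invariant, so in dimensions $4$ and $6$ one can place mutually disjointly supported copies of the negative direction in the dyadic shells $\{2^k\le|x|\le2^{k+1}\}$, obtaining infinitely many linearly independent negative directions for $Q_u$ and hence infinite Morse index.

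\medskip
The main obstacle is in part (c): turning the formal reduction of $Q_u$ to a Hardy inequality on the Simons cone into a rigorous inequality. One must keep three error contributions strictly below the negative principal term --- the discrepancy $u-U$ near infinity (which really requires a rate of convergence, not merely the convergence of part (b), since the principal term scales like $|x|^{-2}$), the region near the vertex of $\mathcal C$ and near $\{st=0\}$ where $U$ is only Lipschitz (handled by keeping supports away and estimating the cut-off terms), and the geometric error from the curvature of the level sets of $z$ --- and, for the infinite Morse index, one must do this uniformly along the sequence of dyadic shells. By comparison, the difficulties in (a) (the $C^2$ regularity of the odd reflection at the vertex of $\mathcal C$) and in (b) (passing from local to uniform convergence) are more routine.
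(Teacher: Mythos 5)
This theorem is quoted from \cite{CT1,CT2}; the paper itself does not reprove it, beyond remarking that (a) follows ``rather simply'' by minimization or monotone iteration and sketching the argument for (b) inside the proof of Lemma~\ref{lemmast}. Measured against those sources, your strategies for (a) and (b) are essentially the right ones, but each omits the step that carries the real content. In (a), the variational construction in ${\mathcal O}\cap B_R$ with odd reflection is exactly the route of \cite{CT1} (and of Lemma~\ref{min} here), but the danger is not that some $u_R$ vanishes --- it is that the limit as $R\to\infty$ could be identically zero in ${\mathcal O}$; ruling this out requires either an explicit positive subsolution placed below all $u_R$ or the stability argument used in the proof of Lemma~\ref{min} (note also that your ``maximum principle gives $u_R\le U$'' is not immediate since $f$ is not monotone; the clean fix is to minimize in the convex set $\{0\le v\le U\}$). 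In (b), the two-case compactness argument is the one sketched in the proof of Lemma~\ref{lemmast}, but in both cases one must show the blow-down limit is not identically zero and then identify it; the paper does this via stability of $u$ in ${\mathcal O}$ (a consequence of \eqref{possuper}) combined with the Liouville theorems of Aronson--Weinberger and of Angenent for half-spaces. Your appeal to ``one-dimensional symmetry of monotone odd entire solutions'' presupposes a monotonicity the limit is not known to have, and your barrier argument in the far-field case requires first proving that an \emph{arbitrary} saddle solution dominates the subsolution, which is itself a sliding argument.

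The genuine gap is in (c), and you have correctly located it yourself. The route you propose --- replace $f'(u)$ by $f'(u_0(z))$ near infinity and run a Hardy inequality on the cone --- needs a quantitative rate for $u-U$, since the competing terms both scale like $|x|^{-2}$; part (b) gives no rate, so the error cannot be absorbed and the argument as written does not close. The proof in \cite{CT2} avoids this entirely: it tests $Q_u$ with $\xi=c(|x|)\,u_y$ (a derivative of $u$ itself, not of $U$), for which the identity $\Delta u_y+f'(u)u_y=\tfrac{m-1}{\sqrt2}\bigl(\tfrac{u_s}{s^2}+\tfrac{u_t}{t^2}\bigr)$ is exact, so the computation reduces after integration by parts to a sharp Hardy-type inequality with the critical constant $2(m-1)$ versus $(2m-2)^2/4$, with no approximation error; the failure of that inequality for $2m\le 6$, and its scale invariance, then yield instability and (via disjoint dyadic annuli) infinite Morse index exactly as you describe. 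So your dyadic-shell mechanism is right, but the test functions must be built from $u$, not from $U$, for the argument to be rigorous.
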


Part (a) of the theorem concerns existence. It can be established
in different ways, all of them rather simple ---for instance, variationally as in 
section~3 of \cite{CT1}, or through monotone iteration
as in section~3 of \cite{CT2}.

Part (b) gives the asymptotic behavior of saddle-shaped solutions at infinity.
In this paper we will also need the asymptotics of second derivatives. 
See Lemma~\ref{lemmast} below,
where we sketch also the proof of \eqref{unif}.
 
The uniqueness of a saddle-shaped solution was only known in dimension 2,
even for the Allen-Cahn nonlinearity. This was established by Dang,
Fife, and Peletier \cite{DFP}, who also proved ---in dimension 2---
existence of a saddle-shaped solution and some results on its asymptotic behavior 
and its monotonicity properties.

The instability of the saddle-shaped solution in $\R^2$, indicated in a partial
result of \cite{DFP}, was studied in detail by Schatzman~\cite{Sc} by analyzing the linearized
operator at the saddle-shaped solution and showing that, when $f(u)=u-u^3$, this
operator has exactly one negative eigenvalue. That is, the saddle-shaped solution 
of the Allen-Cahn equation in $\R^2$ has Morse index 1 ---in contrast
with our result Theorem~\ref{exis-uns}(c) in dimensions 4 and 6. 
See \cite{CT2} for more comments on this.

Our first main result is the following uniqueness theorem.

\begin{theorem}\label{unique}
Assume that $f$ satisfies \eqref{hypf}. Then, for every even dimension  $2m\geq 2$,
there exists a unique saddle-shaped solution $u$ 
of $-\Delta u= f(u)$ in~$\R^{2m}$.
\end{theorem}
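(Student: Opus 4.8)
The plan is to prove uniqueness by a sliding/maximum-principle argument exploiting the precise asymptotic behavior \eqref{unif}. Suppose $u_1$ and $u_2$ are two saddle-shaped solutions of \eqref{eq}. Since both are odd with respect to $\mathcal{C}$ and positive in $\mathcal{O}$, it suffices to compare them in the closure of $\mathcal{O}=\{s>t\}$; on $\mathcal{C}$ both vanish, and odd reflection then settles all of $\R^{2m}$. The key point is that by Theorem~\ref{exis-uns}(b), $u_1$ and $u_2$ are \emph{both} uniformly close to the same profile $U(x)=u_0(z)$ near infinity, so the difference $u_1-u_2$ decays to $0$ as $|x|\to\infty$ uniformly in $\R^{2m}$. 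Together with $u_1-u_2=0$ on the boundary $\mathcal{C}$ of the domain $\mathcal{O}$, this gives us a function vanishing on $\partial\mathcal{O}$ and at infinity, which is exactly the setting in which a maximum principle can be hoped for.

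The difficulty, of course, is that the linearized operator $-\Delta-f'(u)$ need not satisfy the maximum principle in all of $\mathcal{O}$ (indeed saddle solutions are unstable for $2m\le 6$). So a direct linear comparison fails. Instead I would use the nonlinear structure via a sliding argument in the spirit of Berestycki--Nirenberg, but with a twist adapted to the concavity hypothesis $f''<0$ in $(0,1)$. The idea: because $f$ is concave on $(0,1)$, for $\lambda\in(0,1)$ the function $\lambda\mapsto$ (rescaled solution) interacts well with the equation; more concretely, if $w=u_1-u_2$, then $-\Delta w=f(u_1)-f(u_2)=c(x)w$ with $c(x)=\int_0^1 f'(su_1+(1-s)u_2)\,ds$, and one wants to rule out both $\sup_{\mathcal O}w>0$ and $\inf_{\mathcal O}w<0$. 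The standard move is to consider, for $\tau\ge 0$, the shifted/scaled family and find the smallest $\tau$ for which a one-sided comparison $u_1\le$ (comparison function) holds in $\overline{\mathcal{O}}$, then slide $\tau$ down to $0$ using the strong maximum principle and Hopf's lemma, where the decay at infinity plus vanishing on $\mathcal{C}$ prevents the "bump" from escaping. The concavity of $f$ enters to guarantee that the comparison function is a supersolution, and that the contact analysis at an interior touching point yields a contradiction unless $u_1\equiv u_2$.

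More precisely, here is the route I would pursue. Step 1: Reduce to showing $u_1\le u_2$ in $\overline{\mathcal O}$ (then by symmetry of the argument $u_2\le u_1$, hence equality). Step 2: For $\lambda\ge 1$, consider $\lambda u_2$; since $f$ is concave with $f(0)=0$, one has $f(\lambda v)\le \lambda f(v)$ for $v\in[0,1]$ and $\lambda\ge 1$ (as long as $\lambda v\le 1$), so $\lambda u_2$ is a supersolution where it stays below $1$. This crude scaling has boundary-value issues near $\mathcal C$ and near infinity, so Step 3 is to instead use a careful interpolation: set $m_\lambda(\tau):=$ the value making $u_0$-type comparisons work, or simply define $\psi_\lambda := $ solution of the one-dimensional problem used to build super/subsolutions in \cite{CT1,CT2}, and show that for $\lambda$ large $u_1\le \psi_\lambda$ on $\overline{\mathcal O}$ while for $\lambda$ small this fails, then take the infimum. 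Step 4: At the critical $\lambda^*$, if $\lambda^*>0$ there is an interior contact point (it cannot be on $\mathcal C$ where both vanish with, by Hopf applied to the odd extension, matching positive normal derivatives, and it cannot be at infinity by \eqref{unif}); the strong maximum principle for $-\Delta-c(x)$ then forces $u_1\equiv\psi_{\lambda^*}$, and feeding this back into the equation and the asymptotics forces $\lambda^*$ to equal the value corresponding to $U$ itself, i.e. $\lambda^*$ is the bottom of the range, a contradiction. Hence $u_1\le u_2$.

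The main obstacle I anticipate is Step 3--Step 4: constructing a one-parameter family of comparison functions that are genuine supersolutions of $-\Delta u=f(u)$ in $\mathcal O$, vanish (or are nonnegative) on $\mathcal C$, dominate every saddle solution for the extreme parameter value, and degenerate precisely to the asymptotic profile $U$ at the critical value — all while controlling the non-smoothness of $U$ along $\{st=0\}$ and the lack of a global maximum principle. The interplay between the concavity of $f$ (which is what makes sliding/scaling monotone) and the geometry of the Simons cone (which is what localizes the contact point away from $\mathcal C$, via Hopf's lemma on the odd reflection) is the crux; the decay estimate \eqref{unif}, and its second-derivative refinement promised in Lemma~\ref{lemmast}, are exactly the tools that close the argument at infinity.
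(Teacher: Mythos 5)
Your proposal is a program, not a proof: everything hinges on Steps 3--4, where you would need to produce a one-parameter family $\psi_\lambda$ of genuine supersolutions in $\mathcal O$ that dominate every saddle-shaped solution at one end of the parameter range and degenerate to the asymptotic profile at the critical value, and you never construct this family or verify any of its required properties. (The crude scaling $\lambda u_2$ fails immediately, since $\lambda u_2>1$ wherever $u_2$ is close to $1$, outside the range where $f$ is defined and concave; and the Hopf analysis at a putative contact point on $\mathcal C$ is only sketched.) As written, the argument does not close.

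More importantly, you dismiss on incorrect grounds exactly the route the paper takes. You assert that the linearized operator $-\Delta-f'(u)$ cannot satisfy the maximum principle in $\mathcal O$ because saddle solutions are unstable for $2m\le 6$. But instability concerns perturbations supported anywhere in $\R^{2m}$; it does not preclude the maximum principle in the half-region $\mathcal O=\{s>t\}$, and Proposition~\ref{mpsaddle} establishes that this maximum principle \emph{does} hold there in every even dimension. The mechanism is precisely the concavity you invoke for your sliding scheme: since $f''<0$ and $f(0)=0$, one has $f(u)>f'(u)u$ for $0<u<1$, so $u$ itself is a positive strict supersolution of the linearized equation in $\mathcal O$; combined with a maximum principle in the narrow region $\{t<s<t+\varepsilon\}$, where $u$ degenerates to zero (Lemma~\ref{mpnarrow}), this yields the maximum principle in all of $\mathcal O$ (Proposition~\ref{mpholds}). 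The remaining obstruction to a direct linear comparison of $u_1-u_2$ --- namely that the coefficient $c(x)=\int_0^1 f'(\sigma u_1+(1-\sigma)u_2)\,d\sigma$ is not controlled by $f'(u_i)$ with a usable sign --- is removed by the second ingredient you are missing: Lemma~\ref{min} produces a saddle-shaped solution $u$ lying \emph{below both} $u_1$ and $u_2$ in $\mathcal O$ (by minimizing the energy under the obstacle $\min\{u_1,u_2\}$). Concavity then gives $L_u(u_i-u)\ge 0$ with $u_i-u\ge 0$, the boundary and infinity conditions follow from oddness and \eqref{unif}, and the maximum principle in $\mathcal O$ forces $u_i\equiv u$ for $i=1,2$. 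No sliding family is needed.
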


The uniqueness result will follow (see section~3) from two main ingredients: 
the asymptotics \eqref{unif} at infinity for saddle-shaped solutions 
and the following new result ---a maximum principle in ${\mathcal O}=\{s>t\}$
for the linearized operator at every saddle-shaped solution.

\begin{proposition}\label{mpsaddle}
Assume that $f$ satisfies \eqref{hypf}.
Let $u$ be a saddle-shaped solution of \eqref{eq}, where $2m\geq 2$. 

Then, the maximum principle holds for the operator 
$$
L_u:=\Delta + f'(u(x)) \quad\text{ in } {\mathcal O}=\{ s>t\},
$$
in the sense that whenever $v \in C^{2}({\mathcal O})\cap C(\overline{\mathcal O})$ satisfies
\begin{equation}\label{hypintroMP}
L_u v \geq 0 \hbox{ in } {\mathcal O}, \quad v\le 0 \hbox{ on
} \partial{\mathcal O},\hbox{\quad and \quad} \limsup_{x\in{\mathcal O}, |x|\to\infty}
v(x)\le 0,
\end{equation}
then necessarily $v \le 0$ in ${\mathcal O}$.
\end{proposition}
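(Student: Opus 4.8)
The plan is to use a positive supersolution of $L_u$ in $\mathcal{O}$ to run the standard maximum principle argument for unbounded domains. The natural candidate is the derivative of the one-dimensional profile evaluated at the distance to the cone, namely $\varphi(x) := \dot{u}_0(z)$ where $z = (s-t)/\sqrt{2}$. First I would check that $\varphi > 0$ in $\overline{\mathcal{O}}$ (indeed in all of $\mathbb{R}^{2m}$, since $\dot u_0 > 0$), and compute $L_u \varphi = \Delta \varphi + f'(u)\varphi$. Writing the Laplacian in the $(s,t)$ variables and using $-\ddot u_0 = f(u_0)$, one gets $\Delta\varphi = \ddot{u}_0''\big|_{\text{something}}$ — more precisely, since $\varphi$ depends on $x$ through $z$ alone and $|\nabla z| = 1$ with $\Delta z = \frac{m-1}{\sqrt2}\left(\frac1s - \frac1t\right)$, we obtain $\Delta\varphi = \dddot u_0(z) + \dot u_0'(z)\,\Delta z \cdot(\text{coefficient})$; differentiating the ODE gives $-\dddot u_0 = f'(u_0)\dot u_0$, so $L_u\varphi = \big(f'(u)-f'(u_0(z))\big)\dot u_0(z) + (\text{curvature term})\,\ddot u_0(z)$. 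The curvature term $\frac{m-1}{\sqrt2}(\frac1s-\frac1t)$ is $\le 0$ in $\mathcal{O} = \{s>t\}$, and $\ddot u_0 = -f(u_0) < 0$ there (since $u_0 > 0$, hence $z>0$), so that contribution is $\ge 0$; this is promising but the term $\big(f'(u)-f'(u_0(z))\big)\dot u_0$ has no definite sign a priori. The remedy is the concavity hypothesis $f''<0$: one expects $u \le U = u_0(z)$ in $\mathcal{O}$ — this is exactly the kind of comparison that should follow from the sub/supersolution structure mentioned in the excerpt ($U$ being a strict supersolution in $\{s>t>0\}$) together with a sliding or maximum-principle argument using the asymptotics \eqref{unif}. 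Granting $0 < u \le u_0(z)$ in $\mathcal{O}$ and $f'$ decreasing on $(0,1)$, we get $f'(u) \ge f'(u_0(z))$, hence $L_u\varphi \ge 0$; combined with the curvature sign this gives $L_u\varphi \ge 0$ in $\mathcal{O}$, i.e. $\varphi$ is a positive supersolution.

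With such a $\varphi$ in hand, the argument is classical. Set $w := v/\varphi$, which satisfies a linear equation of the form $\Delta w + \frac{2\nabla\varphi}{\varphi}\cdot\nabla w + \frac{L_u\varphi}{\varphi} w \ge 0$ in $\mathcal{O}$, whose zeroth-order coefficient $L_u\varphi/\varphi$ is $\le 0$; the hypotheses in \eqref{hypintroMP} translate to $w \le 0$ on $\partial\mathcal{O}$ and $\limsup_{|x|\to\infty} w \le 0$ (here one must be a little careful near $\partial\mathcal O$, where $\varphi$ stays bounded away from $0$ — actually $\varphi \to \dot u_0(0) > 0$ as $z \to 0^+$ — and near infinity, where $\varphi(z)=\dot u_0(z)\to 0$; for the $|x|\to\infty$ limit one should instead argue directly that $v\le\varepsilon$ outside a large ball and use a barrier, or note that $v \le 0$ at infinity already suffices to run a maximum-principle-in-unbounded-domains argument for the operator $L_u$ once we know it admits a positive supersolution). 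Then the maximum principle for operators with nonpositive zeroth-order coefficient, applied on $\mathcal{O} \cap B_R$ and letting $R \to \infty$, yields $w \le 0$, hence $v \le 0$ in $\mathcal{O}$.

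The main obstacle I anticipate is establishing the pointwise bound $u \le u_0(z)$ in $\mathcal{O}$ (equivalently $u \le U$ there), which is what forces $f'(u) \ge f'(u_0(z))$ and makes $\varphi$ a supersolution. The naïve comparison between the solution $u$ and the supersolution $U$ is delicate because $U$ is only a supersolution in $\{s>t>0\}$, not across $\{st=0\}$, and $U$ is merely Lipschitz there; one has to handle the region near the axes $\{st=0\}\cap\mathcal{O}$ separately, likely exploiting that $u$ is smooth and that along $\{t=0\}$ one can compare with a suitable one-dimensional or radial barrier. An alternative route that avoids proving $u\le U$ globally would be to use a different supersolution: note that $L_u(\dot u_0(z))$ has the bad term $(f'(u)-f'(u_0(z)))\dot u_0(z)$ which is $O(|u-u_0(z)|)$, and by the asymptotics this is small far out; one could then combine $\dot u_0(z)$ with a small correction (a multiple of a slowly growing function, or of $\dot u_0$ itself times a logarithmic factor) to absorb the error near infinity while using the strict curvature gain $\frac{m-1}{\sqrt2}(\frac1s-\frac1t)\ddot u_0(z) > 0$ (strict in $\mathcal O\setminus\{st=0\}$) to dominate it in the bounded part. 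Deciding which of these two strategies the cleanest version of the argument uses — the global comparison $u \le U$, or a perturbed barrier — is the real content; everything after producing the positive supersolution is routine.
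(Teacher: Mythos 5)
There is a genuine gap, and it is a sign error at the heart of your construction. Your own computation gives
\[
L_u\bigl(\dot u_0(z)\bigr)=\bigl(f'(u)-f'(u_0(z))\bigr)\,\dot u_0(z)
+\frac{m-1}{\sqrt2}\Bigl(\frac1s-\frac1t\Bigr)\ddot u_0(z)
\quad\text{in }\{s>t>0\},
\]
and you correctly note that the curvature term is nonnegative there (product of two negative factors), and that the comparison $u\le U=u_0(z)$ together with $f''<0$ forces $f'(u)\ge f'(u_0(z))$, so the first term is nonnegative as well. But this yields $L_u\varphi\ge 0$: your $\varphi=\dot u_0(z)$ is a positive \emph{subsolution} of the linearized operator in $\mathcal O$, not a supersolution. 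The quotient argument $w=v/\varphi$ requires the zeroth-order coefficient $\varphi^{-1}L_u\varphi$ in the equation for $w$ to be nonpositive, i.e.\ $L_u\varphi\le 0$; in your second paragraph you assert this immediately after having derived the opposite inequality. A positive subsolution carries no information for the maximum principle (it is perfectly compatible with instability), and the ``curvature gain'' you invoke at the end is in fact a loss: the positive term $\frac{m-1}{\sqrt2}(\frac1s-\frac1t)\ddot u_0(z)$ pushes $L_u\varphi$ in the wrong direction. A secondary, unresolved issue is infinity: since $\dot u_0(z)\to0$ as $z\to\infty$, the hypothesis $\limsup v\le0$ does not give $\limsup v/\varphi\le0$, and your proposed workarounds are not carried out.

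The paper's route avoids all of this by taking the positive supersolution to be the solution $u$ itself: since $f(0)=0$ and $f''<0$ give $f(\rho)>f'(\rho)\rho$ on $(0,1)$, one has $L_u u=-f(u)+f'(u)u<0$ strictly in $\mathcal O$, see \eqref{possuper}. The price is that $u$ vanishes on $\partial\mathcal O$, so it is not bounded below by a positive constant; this is compensated by splitting $\mathcal O$ into $\{s>t+\varepsilon\}$, where $u\ge\delta>0$ by the asymptotics \eqref{unif}, and the narrow strip $\{t<s<t+\varepsilon\}$, where a separate narrow-domain maximum principle (Lemma~\ref{mpnarrow}) applies for $\varepsilon$ small depending only on $f'(0)$ and $m$. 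To salvage your strategy you would need an actual positive supersolution of $L_u$ in $\mathcal O$; the only readily available one in every dimension $2m\ge2$ is $u$ itself, and handling its degeneracy on $\partial\mathcal O$ is precisely the content of the paper's proof.
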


We establish this result in section~2 using as key ingredient 
a maximum principle in the ``narrow" domain $\{t<s<t+\varepsilon\}$,
where $\varepsilon$ is small; see Lemma~\ref{mpnarrow} below.

As we explain at the end of this introduction, due to a connection between 
minimal surfaces and solutions of the Allen-Cahn equation,
saddle-shaped solutions are expected to be global minimizers of \eqref{eq}
(as defined in the beginning of this introduction) in dimensions 8 and higher
---or at least in dimensions high enough. This is still an open question ---already raised
in 2002 by Jerison and Monneau; 
see conjecture C4 and section~1.3 in~\cite{JM}.

Towards the understanding of this minimality property, in this article we establish 
the stability of saddle-shaped solutions in dimensions $14$ and higher.
This is our second main result. Of course, stability is a weaker property than minimality.
Stability in dimensions 8, 10, and 12 remains an open question.

To state the result, we need to choose a number $b$ such that
\begin{equation}\label{ineqb}
b(b-m+2)+m-1\leq 0.
\end{equation}
This is possible only when $m\geq 7$,
in which case one can take any 
\begin{equation}\label{rangeb}
b\in [b_-,b_+],\quad\text{ where } b_{\pm}=
\frac{m-2}{2}\pm \frac{\sqrt{(m-2)^2-4(m-1)}}{2}.
\end{equation}
It follows that $b>0$.

\begin{theorem}\label{stab} 
Assume that $f$ satisfies \eqref{hypf}.
If $2m\geq 14$, the saddle-shaped solution $u$ of \eqref{eq}
is stable in $\R^{2m}$, i.e., \eqref{stable} holds. 
Furthermore, for every $b>0$ satisfying \eqref{rangeb}, 
the function
\begin{equation}\label{dirder}
\varphi := t^{-b}u_s -s^{-b}u_t
\end{equation}
is of class $C^2$ and positive in $\R^{2m}\setminus\{st=0\}$, 
it is even with respect to the Simons cone,
and it is a supersolution of the linearized equation
\begin{equation*}
\Delta \varphi + f'(u) \varphi \leq 0 \quad\text{ in } \R^{2m}\setminus\{st=0\}.
\end{equation*}
\end{theorem}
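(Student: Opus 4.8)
The plan is to produce an explicit positive supersolution of the linearized operator $\Delta + f'(u)$ on $\R^{2m}\setminus\{st=0\}$, which is even with respect to the Simons cone; stability then follows by a standard argument. The candidate is the directional-derivative combination $\varphi = t^{-b}u_s - s^{-b}u_t$. The first step is to record the equation satisfied by $u$ in the $(s,t)$ variables, namely \eqref{eqst} (the equation $u_{ss}+u_{tt}+\tfrac{m-1}{s}u_s+\tfrac{m-1}{t}u_t+f(u)=0$), differentiate it with respect to $s$ and with respect to $t$, and observe that $u_s$ and $u_t$ each solve a linearized equation with extra first-order and zeroth-order terms coming from the radial weights. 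Because $u$ is a saddle-shaped solution, $u_s>0$ and $u_t<0$ in $\{st>0\}$ (this positivity/monotonicity of the building blocks is what makes $\varphi>0$ there), so $\varphi$ is a positive combination of $u_s$ and $-u_t$; the $C^2$ regularity and the positivity up to the Simons cone will come from the asymptotics in Lemma~\ref{lemmast} and from the strict monotonicity, and the evenness with respect to $\{s=t\}$ is immediate from $u(t,s)=-u(s,t)$, which swaps the roles of $s,t$ and of $u_s,u_t$.

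The heart of the matter is the supersolution inequality. I would compute $L_u\varphi = \Delta\varphi + f'(u)\varphi$ by applying the Laplacian in $(s,t)$ coordinates to $t^{-b}u_s$ and to $-s^{-b}u_t$ separately. Each term produces: the linearized equation for $u_s$ (resp. $u_t$), which cancels against the zeroth-order piece up to lower-order weight terms; cross terms from differentiating the weight $t^{-b}$ (resp. $s^{-b}$) against the radial derivatives $u_s,u_t$; and a ``potential'' term $b(b+1)t^{-b-2}u_s$ (resp. its analogue) from $\partial_{tt}(t^{-b})$, together with $-(m-1)b\, t^{-b-2}u_s$ from the radial Laplacian hitting the weight. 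Collecting everything, the coefficient of the bad term $t^{-b-2}u_s$ (and symmetrically $s^{-b-2}(-u_t)$) should be exactly $b(b+1)-(m-1)b - 2b = b(b-m+2) + (\text{constant})$, which is where the algebraic condition $b(b-m+2)+m-1\le 0$ in \eqref{ineqb} enters and forces the sign. I expect one further cross term of the form (something)$\cdot t^{-b-1}u_{st}$ or $t^{-b}\big(\tfrac1s-\tfrac1t\big)u_s$; the key structural fact to exploit is that $u_{st}$ and the combination $\big(\tfrac{m-1}{s}-\tfrac{m-1}{t}\big)$ have favorable signs in $\mathcal O=\{s>t\}$, and by the evenness of $\varphi$ it suffices to verify $L_u\varphi\le 0$ in $\overline{\mathcal O}\setminus\{st=0\}$ and extend by reflection. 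This sign bookkeeping, making every leftover term manifestly $\le 0$ using $u_s>0$, $u_t<0$, $u_{st}>0$ (or a suitable substitute), and \eqref{ineqb}, is the main obstacle and the step most likely to require care with the precise constants; choosing $b$ in the interval \eqref{rangeb} is precisely what zeroes out or makes negative the dominant coefficient.

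Once $\varphi>0$ satisfies $L_u\varphi\le 0$ in $\R^{2m}\setminus\{st=0\}$, stability of $u$ on all of $\R^{2m}$ follows from the standard fact that existence of a positive supersolution of the linearized operator implies nonnegativity of the quadratic form $Q_u$. Concretely, for $\xi\in C_c^\infty(\R^{2m})$ write $\xi=\varphi\eta$ away from $\{st=0\}$; integrating by parts one gets the pointwise/integral identity
\[
\int |\nabla\xi|^2 - f'(u)\xi^2 \;=\; \int \varphi^2|\nabla\eta|^2 \;-\; \int \eta^2\,\varphi\,(\Delta\varphi + f'(u)\varphi)\;\ge\;0,
\]
the boundary contribution on $\{st=0\}$ being negligible because $\{st=0\}$ has zero capacity in $\R^{2m}$ for $2m\ge 4$ (and the integrand is controlled near it), which is why the argument needs the dimension only through $2m\ge 14$ via \eqref{ineqb}, not through a capacity obstruction. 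I would justify the capacity/cutoff step by a routine truncation near $\{st=0\}$ using that $\varphi$ and $|\nabla\varphi|$ grow at most like a fixed power of $1/(st)$ there, absorbing the error. The evenness and $C^2$-regularity claims, and the fact that $b>0$ lies in $[b_-,b_+]$ exactly when $m\ge 7$, are then recorded as immediate consequences of the construction.
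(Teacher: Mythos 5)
Your proposal follows essentially the same route as the paper: the same candidate $\varphi=t^{-b}u_s-s^{-b}u_t$, the same computation of $\{\Delta+f'(u)\}\varphi$ from the differentiated equations \eqref{equs}--\eqref{equt} and the identity $\Delta t^{-b}=b(b-m+2)t^{-b-2}$, the same role for \eqref{ineqb}, and the same passage from a positive supersolution to stability (multiply by $\xi^2/\varphi$, integrate by parts, Cauchy--Schwarz, then remove the restriction $\mathrm{supp}\,\xi\subset\{st>0\}$ by a cutoff near $\{st=0\}$, which has codimension $m\geq 7$).

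There is, however, one genuine gap in the sign bookkeeping that you defer. Carrying out the computation, one finds in $\{s>t>0\}$
\begin{align*}
\{\Delta+f'(u)\}\varphi \;=\;& \; t^{-b}u_s\bigl\{(m-1)s^{-2}+b(b-m+2)t^{-2}\bigr\}\\
&- s^{-b}u_t\bigl\{(m-1)t^{-2}+b(b-m+2)s^{-2}\bigr\}
+2b\,u_{st}\bigl\{s^{-b-1}-t^{-b-1}\bigr\}.
\end{align*}
The third term is $\leq 0$ by $u_{st}>0$, and the first is $\leq 0$ because \eqref{ineqb} bounds its bracket by $(m-1)(s^{-2}-t^{-2})\leq 0$. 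But the second term is \emph{not} controlled by the inputs you list: there $-u_t>0$, while its bracket $(m-1)t^{-2}+b(b-m+2)s^{-2}$ is positive for $t\ll s$ (and identically equal to $(m-1)(t^{-2}-s^{-2})>0$ at the endpoints $b=b_\pm$). The paper absorbs it by writing $u_s=\sqrt{2}\,u_y-u_t$ in the first term: the $u_y$ part retains a nonpositive coefficient, and the leftover $-u_t$ part combines with the second term to give $(-u_t)(m-1)(s^{-b}-t^{-b})(t^{-2}-s^{-2})\leq 0$ after one more use of \eqref{ineqb}. This step requires $u_y>0$ in $\mathcal{O}$, i.e.\ \eqref{signuy} of Proposition~\ref{monot}, which is a genuinely separate input: it asserts $u_s>-u_t$ and does not follow from $u_s>0$ and $u_t<0$. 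Your list of sign ingredients ($u_s>0$, $u_t<0$, $u_{st}>0$, and \eqref{ineqb}) omits it, and without it the argument does not close. (Two minor points: the exact coefficient of the ``bad'' term is $b(b-m+2)t^{-b-2}u_s$, the $-2b$ you include belongs to the $u_{st}$ cross term; and the linear cutoff computation gives $\varepsilon^{m-2}\to 0$, so it needs $m\geq 3$ rather than $2m\geq 4$ --- irrelevant here since $m\geq 7$.)
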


The statement on the stability of the saddle-shaped solution will follow
immediately from the properties of $\varphi$ stated in the theorem.
The key ingredients to establish Theorem~\ref{stab} are the following monotonicity
and convexity properties of saddle-shaped solutions. They hold in every even dimension.
 
\begin{proposition}\label{monot}  
Assume that $f$ satisfies \eqref{hypf}.
Let $u$ be the saddle-shaped solution of \eqref{eq}, where $2m\geq 2$. 
We then have
\begin{eqnarray}
\label{signuy}
 u_y >0 & & \text{in } {\mathcal O}=\{ s>t\},\\
\label{signut}
 -u_t >0 & & \text{in } {\mathcal O}\setminus\{t=0\}=\{ s>t>0\},
\end{eqnarray}
and
\begin{equation}
\label{second}
 u_{st} >0 \qquad \text{in } {\mathcal O}\setminus\{t=0\}=\{ s>t>0\}.
\end{equation}
\end{proposition}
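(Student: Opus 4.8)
The plan is to differentiate the equation $-\Delta u=f(u)$ written in the $(s,t)$ variables and then, for each of the functions $-u_t$, $u_y=\tfrac{1}{\sqrt2}(u_s+u_t)$ and $u_{st}$, run a maximum principle argument based on Proposition~\ref{mpsaddle}. Writing $\mathcal{L}v:=v_{ss}+v_{tt}+\tfrac{m-1}{s}v_s+\tfrac{m-1}{t}v_t$ for the $\R^{2m}$-Laplacian acting on functions of $(s,t)$, so that $\mathcal{L}u+f(u)=0$ for $s,t>0$, the commutator identities $\partial_s(\mathcal{L}v)=\mathcal{L}(\partial_sv)-\tfrac{m-1}{s^2}\partial_sv$ and $\partial_t(\mathcal{L}v)=\mathcal{L}(\partial_tv)-\tfrac{m-1}{t^2}\partial_tv$ yield, for $s,t>0$,
\[
\mathcal{L}u_s+\Bigl(f'(u)-\tfrac{m-1}{s^2}\Bigr)u_s=0,\qquad
\mathcal{L}u_t+\Bigl(f'(u)-\tfrac{m-1}{t^2}\Bigr)u_t=0,
\]
\[
\mathcal{L}u_y+f'(u)\,u_y=\frac{m-1}{\sqrt2}\Bigl(\frac{u_s}{s^2}+\frac{u_t}{t^2}\Bigr),\qquad
\mathcal{L}u_{st}+\Bigl(f'(u)-\tfrac{m-1}{s^2}-\tfrac{m-1}{t^2}\Bigr)u_{st}=-f''(u)\,u_su_t .
\]
By elliptic regularity $u\in C^{4,\alpha}$, so these are classical uniformly elliptic equations; their extra zeroth-order coefficients $-\tfrac{m-1}{s^2}$ and $-\tfrac{m-1}{t^2}$ are nonpositive, so the maximum principle of Proposition~\ref{mpsaddle} continues to hold for the associated operators (the positive supersolution used in its proof still works, or one reruns the argument of Lemma~\ref{mpnarrow}).

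I would first prove \eqref{signut}. The function $w:=-u_t$ solves the \emph{homogeneous} equation $\mathcal{L}w+\bigl(f'(u)-\tfrac{m-1}{t^2}\bigr)w=0$ in $\{s>t>0\}$. On the smooth hypersurface $\mathcal{C}\setminus\{0\}$ we have $u>0$ in $\mathcal{O}$ and $u=0$ on $\mathcal{C}$, so Hopf's lemma gives that the outer normal derivative of $u$ is negative there, which by the identity $u_s=-u_t$ on $\mathcal{C}$ (obtained by differentiating $u(t,s)=-u(s,t)$ at $s=t$) forces $w>0$ on $\mathcal{C}\setminus\{0\}$; on $\{t=0\}$ one has $w=0$ because $u$ is even in $x''$; and $w\to\tfrac{1}{\sqrt2}\dot u_0(z)\ge0$ at infinity by Theorem~\ref{exis-uns}(b). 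The maximum principle applied to $u_t$ then gives $w\ge0$, and the strong maximum principle upgrades this to $w>0$ in $\{s>t>0\}$. Next, since \eqref{signuy} will require knowing $u_s>0$, I obtain this by running the maximum principle for $\mathcal{L}+f'(u)-\tfrac{m-1}{s^2}$ directly on $\mathcal{O}$, now viewed as an open subset of $\R^{2m}$ whose only boundary is $\mathcal{C}$; the set $\{t=0,\,s>0\}$ is interior to $\mathcal{O}$, and there the coefficient $\tfrac{m-1}{s^2}$ is harmless since $s>0$ throughout $\mathcal{O}$. As $u_s=-u_t>0$ on $\mathcal{C}\setminus\{0\}$, $u_s=0$ at the vertex, $u_s\to\tfrac{1}{\sqrt2}\dot u_0(z)\ge0$ at infinity, and $u_s$ solves a homogeneous equation, the maximum principle and the strong maximum principle give $u_s>0$ in $\mathcal{O}$.

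For \eqref{signuy}: in $\{s>t>0\}$ we now know $u_s>0>u_t$, and since $0<s^{-2}<t^{-2}$ this gives $\tfrac{u_s}{s^2}+\tfrac{u_t}{t^2}<\tfrac{u_s+u_t}{t^2}=\tfrac{\sqrt2\,u_y}{t^2}$, whence the equation for $u_y$ becomes $\mathcal{L}u_y+\bigl(f'(u)-\tfrac{m-1}{t^2}\bigr)u_y<0$: that is, $u_y$ is a \emph{strict} supersolution there. Since $u_y=0$ on $\mathcal{C}$ (it is odd in $z$), $u_y=\tfrac{1}{\sqrt2}u_s>0$ on $\{t=0,\,s>0\}$, and $u_y\to0$ at infinity, the maximum principle gives $u_y\ge0$ in $\{s>t>0\}$; strictness of the supersolution inequality forbids an interior zero, and together with the boundary piece $\{t=0\}$ we obtain $u_y>0$ in all of $\mathcal{O}$. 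For \eqref{second}: since $f''<0$ on $(0,1)$ and $u_s>0>u_t$ in $\{s>t>0\}$, the right-hand side $-f''(u)u_su_t$ is negative there, so $u_{st}$ is a strict supersolution of $\mathcal{L}+f'(u)-\tfrac{m-1}{s^2}-\tfrac{m-1}{t^2}$; it vanishes on $\mathcal{C}$ (being $\tfrac{1}{2}(u_{yy}-u_{zz})$, it is odd in $z$) and on $\{t=0\}$ (where $u_t\equiv0$ and $\partial_s$ is tangent), while $u_{st}\to-\tfrac{1}{2}\ddot u_0(z)=\tfrac{1}{2} f(u_0(z))\ge0$ at infinity by the second-derivative asymptotics of Lemma~\ref{lemmast}; the maximum principle and strictness then give $u_{st}>0$ in $\{s>t>0\}$.

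The main difficulty is not the maximum-principle machinery but the sign bookkeeping together with the degeneracy along $\{st=0\}$. The equations for $u_y$ and $u_{st}$ are inhomogeneous, and turning them into supersolution inequalities requires prior knowledge of the signs $u_s>0$, $u_t<0$ and, for $u_y$, the crucial elementary inequality $\tfrac{u_s}{s^2}+\tfrac{u_t}{t^2}<\tfrac{\sqrt2\,u_y}{t^2}$, which uses $s>t$ essentially; this is why \eqref{signut} must come first and why $u_s>0$ has to be derived from a maximum principle posed on $\mathcal{O}$ rather than on $\{s>t>0\}$ (its equation, unlike those for $u_t$, $u_y$, $u_{st}$, is nonsingular across $\{t=0\}$). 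One must also check with care that $u_s$, $u_t$, $u_{st}$ extend continuously to $\overline{\mathcal{O}}$ with the boundary values used above, that the singular coefficients $\tfrac{m-1}{s^2}$, $\tfrac{m-1}{t^2}$ --- being nonpositive where they enter as zeroth-order terms --- do not spoil the maximum principle near $\{st=0\}$ or at the vertex, and that Proposition~\ref{mpsaddle} indeed transfers to the subdomain $\{s>t>0\}$ and to the three modified operators. The remaining ingredients (Hopf's lemma, the strong maximum principle, and the asymptotics of Theorem~\ref{exis-uns}(b) and Lemma~\ref{lemmast}) are routine.
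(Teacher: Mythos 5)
Your proof is correct and follows essentially the same route as the paper: differentiate the equation in the $(s,t)$ variables and apply the maximum principle of Proposition~\ref{mpholds} (the extension of Proposition~\ref{mpsaddle} allowing an extra nonpositive, possibly unbounded coefficient $c$) to $u_s$, $u_t$, $u_y$, $u_{st}$ with the potentials $-(m-1)s^{-2}$, $-(m-1)t^{-2}$, using the boundary signs, the asymptotics of Theorem~\ref{exis-uns}(b) and Lemma~\ref{lemmast}, and the strong maximum principle to get strict inequalities. The only (harmless) deviations are that you invoke Hopf's lemma for the sign of $u_s=-u_t$ on $\mathcal{C}\setminus\{0\}$ where the paper uses the elementary one-sided derivative sign coming from $u>0$ in $\mathcal{O}$ and $u=0$ on $\mathcal{C}$, and that for $u_y$ you absorb the right-hand side into $-(m-1)t^{-2}u_y$ on $\{s>t>0\}$ whereas the paper absorbs it into $-(m-1)s^{-2}u_y$ on all of $\mathcal{O}$.
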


The cone of monotonicity
generated by $\partial_y$ and $-\partial_t$ is the optimal one (i.e., the largest one)
holding at all points of $\{ s>t>0\}$. In Figure~\ref{fig2} we draw this cone,
and also the shape of the level sets of a saddle-shaped solution ---where we
take into account the asymptotic result \eqref{unif}.

\begin{figure}[htp]
	\begin{center}
		\psfrag{t}{$t \ge 0$}
		\psfrag{s}{$s \ge 0$}
		\psfrag{C}{${\mathcal C}$}
		\psfrag{u}{$\mu$}
		\psfrag{dy}{$\partial_y$}
		\psfrag{dt}{$-\partial_t$}
		\psfrag{l}{$\{u=\lambda=u_0(\mu)\}$}
		\includegraphics[width=5cm]{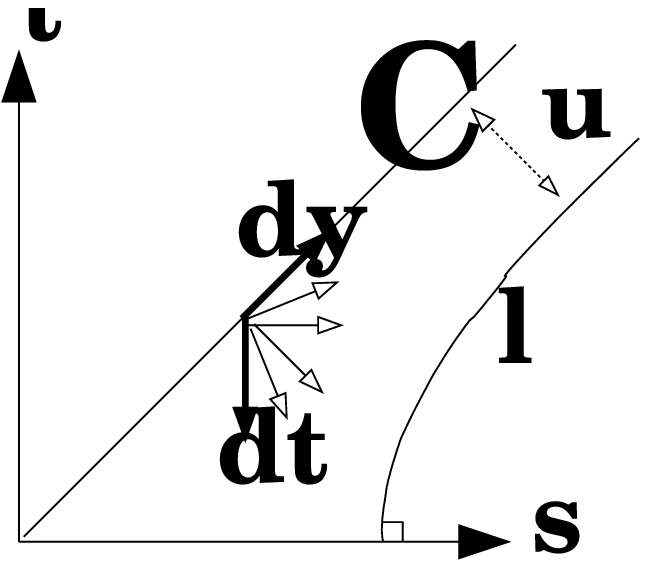}
	\end{center}
	\caption{The cone of monotonicity and the level sets of the saddle-shaped solution}
	\label{fig2}
\end{figure}

The monotonicity properties \eqref{signuy} and \eqref{signut} were first
proven by J.~Terra and the author in \cite{CT2} for the so-called maximal 
saddle-shaped solution ---at that point uniqueness of saddle-shaped solution
was not known. Here we establish these properties using
a different method. 

The second derivative
property \eqref{second} is a new fact proved in this paper. It is a crucial ingredient to
establish stability in dimensions 14 and higher.

The three inequalities in Proposition~\ref{monot} will be proven using 
our maximum principle for the 
linearized operator in ${\mathcal O}$, or rather a slightly more general version:
Proposition~\ref{mpholds} of next section.

To establish such maximum principle, note that since $f(0)=0$ and $f''<0$ in $(0,1)$, 
we have $f(\rho)/\rho > f'(\rho)$
for all $0<\rho<1$. Thus, if $u$ is a saddle-shaped solution then
\begin{equation}\label{possuper}
-\Delta u =f(u) > f'(u) u \qquad\text{in } {\mathcal O}.
\end{equation}
That is, $u$ is a positive and strict supersolution of the linearized
operator $\Delta +f'(u)$ at $u$ in all ${\mathcal O}$.
This will be one of the ingredients (but not the only one since $\inf_{{\mathcal O}}u=0$)
to establish the maximum principle 
in~${\mathcal O}$ for the linearized operator. The other ingredient will be
a maximum principle for the linearized operator 
in the ``narrow" domain $\{t<s<t+\varepsilon\}$, Lemma~\ref{mpnarrow}.

To prove Proposition~\ref{monot}, we will apply the maximum principle of
Proposition~\ref{mpholds} to the equations
satisfied by $u_s$, $u_t$, $u_y$, and $u_{st}$. For this, note that equation \eqref{eq}
in the $(s,t)$ variables reads
\begin{equation}\label{eqst}
u_{ss}+u_{tt}+(m-1){\Big (}\frac{u_s}{s}+\frac{u_t}{t}{\Big
)}+f(u)=0 
\end{equation}
for $s>0$ and $t>0$,
while in the $(y,z)$ variables it becomes
\begin{equation}\label{eqyz}
u_{yy}+u_{zz}+\frac{2(m-1)}{y^2-z^2}(yu_y-zu_z)+f(u)=0
\end{equation}
for $|z|<y$.

Finally, let us explain why $2m\leq 6$ is relevant in Theorem~\ref{exis-uns}(c)  
and why stability and minimality are expected in higher dimensions.
Due to a relation between the Allen-Cahn equation
and the theory of minimal surfaces (see~\cite{Sa,JM,dP,AAC}), every level set
of a global minimizer of \eqref{eq} should converge at infinity 
in some weak sense
to the boundary of a minimal set ---minimal in
the variational sense, that is, minimizing perimeter. 
Note now that the zero level set of a saddle-shaped solution is the Simons
cone ${\mathcal C}$. It is easy to
verify that ${\mathcal C}$ has zero mean curvature at every
$x\in{\mathcal C}\backslash\{0\}$, in every dimension $2m\geq 2$.
However, it is only in dimensions $2m\geq 8$ when ${\mathcal C}$ is in addition a
minimizer of the area functional, i.e., it is a minimal cone 
in the variational sense.  Furthermore,  ${\mathcal C}$ is stable
only in these same dimensions. See \cite{CT2} and references therein for these
questions.

Furthermore, a deep theorem states that the boundary of
a minimal set in all of $\R^n$ must be a hyperplane if $n\leq 7$.
Instead, in $\R^8$ and higher dimensions, there exist minimal sets different
than half-spaces ---the simplest example being the Simons cone.

The analogue for the Allen-Cahn equation of the first of these two results is well understood.
Indeed, a deep theorem of Savin~\cite{Sa} states that in dimensions $n\leq 7$, 
$1$D solutions (i.e., solutions depending only on one Euclidean variable)
are the only global minimizers of the Allen-Cahn equation. Note that this result makes 
no assumption on the monotonicity or limits at infinity of the solution. That is:

\begin{theorem}[\textbf{Savin \cite{Sa}}]\label{teosavin}
Assume that $n\leq 7$ and that $u$ is a global minimizer of 
$-\Delta u=u-u^3$ in $\R^n$. 
Then, the level sets of $u$ are hyperplanes.
\end{theorem}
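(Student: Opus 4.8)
The plan is to prove the stronger statement that $u$ is \emph{one-dimensional}, i.e., after a rotation and translation $u(x)=u_0(x\cdot e)$ for some unit vector $e$; this is equivalent to all level sets being parallel hyperplanes. The overall strategy follows the De Giorgi--type philosophy of Savin's work: first show that the blow-down of $u$ has a flat interface, because at infinity $u$ is governed by a perimeter-minimizing cone which --- since $n\le 7$ --- must be a hyperplane; then run an \emph{improvement-of-flatness} iteration that transports this flatness from arbitrarily large scales down to every finite scale, forcing $\{u=0\}$ to lie inside a hyperplane. First I would record two standard estimates for minimizers. Comparing $u$ on $B_R$ with a competitor equal to $\pm 1$ outside a unit neighbourhood of $\partial B_R$ and interpolating across that layer via $u_0$ gives the linear energy growth $\mathcal E(u,B_R)\le CR^{n-1}$; minimality then yields the density estimates --- if $u(x_0)\ge -1+\delta_0$ for a fixed small $\delta_0$, then $|\{u>-1+\delta_0\}\cap B_r(x_0)|\ge c\,r^n$ for all $r\le 1$, and symmetrically for $\{u<1-\delta_0\}$. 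These prevent the interface $\{|u|\le 1-\delta_0\}$ from thinning out in measure and force clean phases away from it.

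Next I would blow down. The rescalings $u_R(x):=u(Rx)$ are global minimizers of $\int\tfrac R2|\nabla v|^2+\tfrac1R G(v)$, and the energy bound together with the density estimates give compactness: along a subsequence $R_k\to\infty$, $u_{R_k}\to\chi_E-\chi_{E^c}$ in $L^1_{loc}$, the energies $\Gamma$-converge to a multiple of the perimeter, and $E$ \emph{minimizes perimeter} in all of $\R^n$. By the monotonicity formula a blow-down of $\partial E$ is a minimizing cone, and by the Simons/Bombieri--De Giorgi--Giusti classification --- valid precisely because $n\le 7$ --- this cone, and hence the interface of the blow-down, is a hyperplane $\{x\cdot e=0\}$. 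Unwinding the rescaling, this says: for every $\eta>0$ there is $R_\eta$ such that $\{|u|\le 1-\delta_0\}\cap B_R\subset\{|x\cdot e|\le\eta R\}$ for all $R\ge R_\eta$, i.e., $u$ is $\eta$-flat at all large scales.

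The crux, and what I expect to be by far the main obstacle, is the \emph{improvement of flatness}: there are constants $\varepsilon_0(n)>0$, $\rho\in(0,1)$ and $\theta\in(0,1)$ such that if $u$ is a minimizer in $B_1$ with $\{|u|\le 1-\delta_0\}\cap B_1\subset\{|x\cdot e|\le\varepsilon\}$ for some $\varepsilon\le\varepsilon_0$, then there is a unit vector $e'$ with $|e'-e|\le C\varepsilon$ and $\{|u|\le 1-\delta_0\}\cap B_\rho\subset\{|x\cdot e'|\le\theta\rho\varepsilon\}$. I would prove this by compactness and contradiction: a sequence of minimizers $u_k$ with flatness $\varepsilon_k\to 0$ violating the conclusion, after dilating the interface by $\varepsilon_k^{-1}$ in the $e$-direction, produces a limiting interface which is the graph of a function $w$ over $\{x\cdot e=0\}$. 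Two ingredients close the argument: (i) Savin's geometric Harnack inequality for the interface of a minimizer --- obtained by sliding families of radial sub- and supersolutions built from $u_0$ composed with the distance to large spheres --- which yields uniform H\"older estimates, hence compactness, of the dilated interfaces; and (ii) the identification of $w$ as harmonic, because to leading order the Allen--Cahn interface solves the minimal surface equation, whose linearization at a hyperplane is the Laplacian. Interior gradient estimates for harmonic functions then contradict the assumed failure of the improvement. Iterating the dichotomy geometrically from scale $1$ downward shows that once the interface is $\varepsilon_0$-flat in $B_1$, its flatness in $B_r$ decays like $Cr^\alpha$; in particular $\{u=0\}\cap B_{1/2}$ is a $C^{1,\alpha}$ hypersurface.

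Combining the previous two steps would then finish the argument. Fix $\eta\le\varepsilon_0$ and $R\ge R_\eta$, and rescale $B_R$ to $B_1$ so that the interface is $\eta$-flat; applying the iterated improvement gives, for all $1\le r\le R$, flatness of $\{u=0\}$ in $B_r$ at most $C\eta(r/R)^\alpha$. Letting $R\to\infty$ with $r$ fixed forces the flatness at every scale $r$ to be $0$, so $\{u=0\}$ lies in a single hyperplane $\{x\cdot e=c\}$. Finally, a minimizer whose zero set is a hyperplane is invariant under translations tangent to that plane --- by the sliding method and the strong maximum principle --- and therefore equals the unique one-dimensional minimizer $u_0(x\cdot e-c)$, so all level sets of $u$ are hyperplanes, as claimed. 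I reiterate that the entire difficulty sits in the third step: the geometric Harnack inequality and the ensuing improvement of flatness admit no shortcut, requiring delicate scale-by-scale barrier constructions from $u_0$ (where the precise structure of $f$ and the exponential decay of $u_0$ at $\pm\infty$ enter) together with careful control of the dilated interfaces in the compactness argument; by contrast, the hypothesis $n\le 7$ is used only through the classical classification of minimizing cones and plays no role in the iteration machinery itself.
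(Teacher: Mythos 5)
You should first be aware that the paper does not prove this statement at all: it is quoted verbatim as a deep theorem of Savin \cite{Sa}, and its (long) proof lives entirely in that reference. So there is no ``paper proof'' to compare against; what you have written is a roadmap of Savin's own argument, and as a roadmap it is accurate --- linear energy growth from a sliced competitor, Caffarelli--C\'ordoba-type density estimates, blow-down and $\Gamma$-convergence to a perimeter minimizer, the Simons/Bombieri--De Giorgi--Giusti classification of minimizing cones (the only place $n\le 7$ enters), and then the transfer of flatness from infinity to all scales via an improvement-of-flatness iteration.

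As a \emph{proof}, however, the proposal has a genuine gap, and it is exactly the one you flag yourself: the geometric Harnack inequality for level sets of minimizers and the resulting improvement of flatness are only described, not established. That step is not a routine compactness argument --- the difficulty is that at flatness scale $\varepsilon$ the interface is not known a priori to be a graph, the sliding barriers built from $u_0$ must work at scales where the equation is a singular perturbation of the minimal surface equation, and the ``viscosity-type'' compactness needed to identify the harmonic limit $w$ requires uniform H\"older control of the dilated level sets that is itself the content of Savin's Harnack estimate. Since this machinery constitutes essentially all of \cite{Sa}, invoking it by name means the proposal does not stand on its own. A second, smaller point: your final passage from ``$\{u=0\}$ is a hyperplane'' to ``$u$ is one-dimensional'' is stated too quickly; the sliding/strong maximum principle argument needs the asymptotic behavior of $u$ in the direction $e$ (or, as in Savin's treatment, one applies the flatness analysis to every level set $\{u=\lambda\}$, not just the zero set) before one can conclude $u(x)=u_0(x\cdot e-c)$. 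None of this indicates a wrong approach --- it is the correct and, as far as is known, the only approach --- but the central analytic content remains to be supplied.
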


However, the analogue for the Allen-Cahn equation of the second statement 
(i.e., the minimality of the Simons cone when  $2m\geq 8$)
is not yet understood. That is, the possible minimality in $\R^8$
(or at least in higher dimensions)
of the saddle-shaped solution is still unknown.
This question was already raised in 2002 by Jerison and Monneau~\cite{JM}.

\medskip

\noindent
\textbf{Open Question 1.7.}
Is the saddle-shaped solution a
global minimizer of the Allen-Cahn equation in $\R^{2m}$ for $2m\geq 8$, 
or at least in higher even dimensions?

\medskip
\addtocounter{theorem}{1}

Related to this, in $\R^9$ it is known the existence of a global minimizer
to the Allen-Cahn equation which is not $1$D (i.e., with
level sets different than hyperplanes). This is the solution in $\R^9$,
monotone in the $x_9$ variable, 
constructed by del Pino, Kowalczyk, and Wei~\cite{dP}.
Since this monotone solution is known to have limits $\pm 1$ as $x_{9}\to\pm\infty$,
a result of Alberti, Ambrosio, and the author \cite{AAC} guarantees that the
solution is indeed a global minimizer.

In this direction, a positive answer to the Open Question~1.7 above would 
give an alternative way 
to that of \cite{dP} to prove the existence of a counter-example
to the conjecture of De Giorgi in $\R^9$. Indeed, saddle-shaped
solutions are even functions of each coordinate $x_i$. Thus,
by a result of Jerison and Monneau~\cite{JM}, if the saddle-shaped solution
were a global minimizer in $\R^{2m}$, then the conjecture of De Giorgi
on monotone solutions would not hold in $\R^{2m+1}$.
Indeed, from the 1D solution depending only on $x_{2m+1}$ and from a 
global minimizing saddle-shaped solution 
depending only on $(x_1,\ldots,x_{2m})$,
\cite{JM} constructs in a natural way
a solution in $\R^{2m+1}$ which is monotone in
the last variable $x_{2m+1}$ and which is not 1D. However, it is not proved
that the solution of \cite{JM}
has limits $\pm 1$ as $x_{2m+1}\to\pm\infty$
---a property known for the solution in $\R^9$ of \cite{dP}.

The plan of the paper is the following.
In section~2 we prove the maximum principle for the linearized operator
in ${\mathcal{O}}$. Section~3 establishes the uniqueness of saddle-shaped solution,
Theorem~\ref{unique}.
In section~4 we establish Proposition~\ref{monot} 
on the sign of derivatives of $u$. Finally, section~5 concerns the supersolution
of the linearized equation and completes the proof of our
stability result, Theorem~\ref{stab}.

\section{The maximum principle for $\Delta+f'(u)$ in $\mathbb{\mathcal O}$}

Let us consider linear operators of the form
\begin{equation*}
Lv:= \Delta v + c(x) v,
\end{equation*}
where $c$ is a continuous function (perhaps unbounded) in an open set $\Omega$ of $\R^n$,
i.e., $c \in C(\Omega)$.

\begin{definition}\label{defmp}
{\rm
Let $\Omega$ be an open set of $\R^n$. 
We say that the maximum principle holds for the operator $L$ in
$\Omega$ if, whenever $v \in C^{2}(\Omega)\cap
C(\overline\Omega)$ satisfies
\begin{equation}\label{hypMP}
Lv \geq 0 \hbox{ in } \Omega, \quad v\le 0 \hbox{ on
} \partial\Omega,\hbox{\quad and \quad} \limsup_{x\in\Omega, |x|\to\infty}
v(x)\le 0,
\end{equation}
then necessarily $v \le 0$ in $\Omega$.
}
\end{definition}

The last condition in \eqref{hypMP} only plays a role when $\Omega$ is unbounded.

The main result in this section is the following.

\begin{proposition}\label{mpholds}
Assume that $f$ satisfies \eqref{hypf}.
Let $u$ be a saddle-shaped solution of \eqref{eq}, where $2m\geq 2$. 
Let $\Omega\subset {\mathcal O}=\{ s>t\}$ be an open set and $c\in C(\Omega)$ with
$c\leq 0$ in~$\Omega$. 

Then, the maximum principle holds for the operator 
$L_u+c(x)=\Delta +\{ f'(u(x))+c(x) \}$ in~$\Omega$.
\end{proposition}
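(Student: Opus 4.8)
The plan is to prove Proposition~\ref{mpholds} by combining the strict positive supersolution $u$ (valid in all of $\mathcal O$) with a maximum principle in narrow strips adjacent to the Simons cone, where $u$ degenerates to zero. The classical criterion is: if there exists a positive function $w\in C^2(\Omega)$ with $(\Delta+f'(u)+c)w\le 0$ in $\Omega$, then the maximum principle holds for $L_u+c$ in $\Omega$ provided one can rule out mass escaping to the boundary where $w$ vanishes or to infinity. Here the natural candidate is $w=u$ itself: by \eqref{possuper} we have $-\Delta u=f(u)>f'(u)u$ in $\mathcal O$, hence $(\Delta+f'(u))u<0$, and adding $c\le 0$ with $u>0$ keeps $(\Delta+f'(u)+c)u<0$. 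So $u$ is a strict positive supersolution of $L_u+c$ in $\mathcal O$, and in particular in $\Omega$.

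The standard argument then goes as follows. Suppose $v$ satisfies \eqref{hypMP} but $\sup_\Omega v>0$. Consider the quotient $\sigma:=v/u$, which is well-defined in $\mathcal O$ since $u>0$ there; a computation shows that $\sigma$ satisfies an equation of the form
\begin{equation*}
\operatorname{div}(u^2\nabla\sigma)+u^2\,c\,\sigma + \big\{\Delta u+f'(u)u\big\}\,u\,\sigma \ge 0,
\end{equation*}
so that, using $\Delta u+f'(u)u<0$ and $c\le0$, at any interior positive maximum of $\sigma$ we obtain a contradiction via the strong maximum principle applied to the operator $\sigma\mapsto \operatorname{div}(u^2\nabla\sigma)+(\text{nonpositive zeroth order})\sigma$. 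Thus $\sigma$ cannot attain a positive interior maximum in $\mathcal O$, and by continuity $\sup_\Omega\sigma=\sup_{\Omega}v/u$ must be ``attained'' on $\partial\Omega$ or at infinity. On the portion of $\partial\Omega$ lying in the interior of $\mathcal O$ we have $v\le0$, hence $\sigma\le0$ there; and at infinity the decay hypothesis $\limsup v\le0$ together with lower bounds on $u$ far from $\mathcal C$ (which follow from the asymptotics \eqref{unif}: away from a neighborhood of $\mathcal C$, $u$ is bounded below by a positive constant on each compact piece, and uniformly bounded below outside large balls) handles the behavior as $|x|\to\infty$.

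The main obstacle is the part of $\partial\Omega$ that touches the Simons cone $\mathcal C=\partial\mathcal O$, where $u\to0$ and the ratio $v/u$ may a priori blow up — precisely the point flagged in the excerpt that ``$\inf_{\mathcal O}u=0$.'' This is where Lemma~\ref{mpnarrow}, the maximum principle in the narrow strip $N_\varepsilon:=\{t<s<t+\varepsilon\}$, enters. The idea is to first fix a small $\varepsilon>0$ so that the narrow-domain maximum principle holds for $L_u+c$ in $N_\varepsilon\cap\Omega$ (narrowness makes the first Dirichlet eigenvalue of $-\Delta$ large, dominating the bounded-above part $f'(u)+c\le \sup f' + 0$ of the zeroth-order term — here $f'$ is bounded on $[-1,1]$ since $f\in C^{2,\alpha}$). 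Then on $\Omega\setminus N_\varepsilon\subset\{s\ge t+\varepsilon\}$ we are uniformly away from $\mathcal C$, so $u\ge\delta_\varepsilon>0$ on compact subsets and the quotient argument above works cleanly; the boundary of this inner region consists of the original $\partial\Omega$ (where $v\le0$) and the slice $\{s=t+\varepsilon\}$. One runs the two estimates together: assuming $\max v>0$ is attained in the closure, it is attained either in $\overline{N_\varepsilon\cap\Omega}$ or in $\overline{\Omega\setminus N_\varepsilon}$; in the first case the narrow maximum principle gives $v\le0$ unless the max is on $\{s=t+\varepsilon\}\cup\partial\Omega$, pushing it into the second region; in the second case the $v/u$ argument gives $v\le 0$ unless the max is on $\{s=t+\varepsilon\}$, and there one compares with the narrow strip. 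Chasing the location of the maximum between the two regions, together with $v\le0$ on $\partial\Omega$ and $\limsup_{|x|\to\infty}v\le0$, forces $\sup_\Omega v\le0$, which is the claim. I expect the delicate bookkeeping to be in making the ``where is the maximum attained'' dichotomy rigorous on an unbounded $\Omega$, which is handled by first proving the statement on $\Omega\cap B_R$ with the extra boundary piece on $\partial B_R$ controlled by the $\limsup$ hypothesis, and then letting $R\to\infty$.
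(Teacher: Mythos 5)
Your proposal is correct and follows essentially the same route as the paper: $u$ as a strict positive supersolution via \eqref{possuper}, the quotient $w=v/u$ with the identity $\operatorname{div}(u^2\nabla w)\ge -v\,Lu$, the uniform lower bound $u\ge\delta>0$ on $\Omega\cap\{s>t+\varepsilon\}$ coming from the asymptotics \eqref{unif}, and Lemma~\ref{mpnarrow} in the narrow strip to handle the part of $\Omega$ abutting the Simons cone. The only step you leave loose --- the ``chasing'' of a maximum sitting on the interface $\{s=t+\varepsilon\}$ --- is resolved in the paper in one stroke by applying the narrow-domain maximum principle to the comparison function $v-Su$ (with $S=\sup_{\overline{\Omega}_\varepsilon}v/u$) rather than to $v$ itself, which shows $\sup_{\overline\Omega}w=\sup_{\overline{\Omega}_\varepsilon}w$ and hence that the maximum point is interior to $\Omega$, where the strict supersolution inequality gives the contradiction.
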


We will use this result to prove both Theorem~\ref{unique} on uniqueness and 
Proposition \ref{monot} on the sign of $u_y$, $u_t$, and 
$u_{st}$. For this we will use the previous proposition both with
$\Omega={\mathcal O}=\{ s>t\}$ and with $\Omega={\mathcal O}\setminus\{t=0\}=\{ s>t>0\}$.
We will need to use it with different choices of coefficient $c=c(x)$,
with $c$ continuous and nonpositive in $\Omega$ but unbounded below.

The rest of this section is devoted to prove Proposition~\ref{mpholds}. For this,
recall that the typical way towards establishing the maximum principle 
for an operator $L$ in an open set $\Omega$ is to first show that
\begin{equation}\label{knowexist}
\text{``there exists a positive supersolution $\phi$ of $L\phi=0$ in $\Omega$''.}  
\end{equation}
If this holds, then adding one of various additional
assumptions on $\phi$ (the simplest one being $\phi\geq c>0$ with $c$ 
a positive constant), it does guarantee the maximum principle to
hold; see \cite{BNV}. Indeed, in bounded domains, \eqref{knowexist} is a 
necessary ---and ``almost'' sufficient--- condition for the maximum principle to
hold; see Corollary 2.1 of \cite{BNV}. However, in unbounded domains one has to
be more careful to deal with infinity.

Recall that when $u$ is a saddle-shaped solution of \eqref{eq},
$u$ is a positive supersolution of the linearized
operator $\Delta +f'(u)$ at $u$ in all ${\mathcal O}$;
see \eqref{possuper}. However, $\inf_{\mathcal O} u=0$ since $u=0$ on $\partial {\mathcal O}$.
Nevertheless, we claim that for every given $\varepsilon>0$, we have that
\begin{equation}\label{positu}
u \geq \delta >0 \qquad\text{in } {\mathcal O}_\varepsilon:=\{s>t+\varepsilon\}
\end{equation}
for some positive constant $\delta$ (which may depend on 
the particular solution $u$). Indeed,
$U(x)=u_0(z)\geq u_0(\varepsilon /\sqrt{2})>0$ in ${\mathcal O}_\varepsilon$.
Hence, by the asymptotic behavior of saddle-shaped solutions at infinity,
Theorem~\ref{exis-uns}(b),
there exists a radius $R>0$ such that 
$u(x)\geq u_0(\varepsilon /\sqrt{2})/2$ (a positive constant)
if $|x|>R$ and $x\in {\mathcal O}_\varepsilon$. Now, since $u$ is positive in the compact set  
$\overline{\mathcal O}_\varepsilon\cap \overline{B}_R(0)$, we conclude the claim.

The lower bound \eqref{positu} in ${\mathcal O}_\varepsilon$ together with the
following maximum principle in ${\mathcal N}_\varepsilon :=
{\mathcal O}\setminus \overline{\mathcal O}_\varepsilon$,
a ``narrow'' domain in a sense explained later,
will lead to the maximum principle for $L_u$ in all of~${\mathcal O}$.

\begin{lemma}\label{mpnarrow}
Let $2m\geq 2$, $\varepsilon >0$, and 
$$
{\mathcal N}_\varepsilon := \{ t<s<t+\varepsilon\}\subset \R^{2m}.
$$
Let $H\subset {\mathcal N}_\varepsilon$ be an open set and
$\tilde{c}\in C(H)$ satisfy $\tilde{c}_+\in L^\infty(H)$, where
$\tilde{c}_+$ denotes the positive part of $\tilde{c}$.

Then, the maximum principle holds
for the operator $\Delta +\tilde{c}(x)$ in $H$ whenever 
\begin{equation}\label{smallep}
C_m\; \varep^2 \; \|\tilde{c}_+\|_{L^\infty(H)} < 1,
\end{equation}
where $C_m$ is a positive constant depending only on~$m$.
\end{lemma}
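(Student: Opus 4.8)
The plan is to establish the maximum principle by exhibiting a positive supersolution $\phi$ of $\Delta+\tilde c_+$ in $\mathcal N_\varepsilon$ that is bounded above and below by positive constants, and then deducing the maximum principle from its existence by the classical argument of \cite{BNV}. For the barrier I would use the signed distance to the Simons cone, $z=(s-t)/\sqrt2$, which runs over $(0,\varepsilon/\sqrt2)$ on $\mathcal N_\varepsilon$. Since $\nabla s$ and $\nabla t$ are orthogonal unit vectors away from $\{st=0\}$, one has $|\nabla z|=1$ and
\[
\Delta z=\frac{m-1}{\sqrt2}\Big(\frac1s-\frac1t\Big)\ \le\ 0\qquad\text{in }\mathcal O=\{s>t\},
\]
with equality only on $\partial\mathcal O$; note that $\Delta z$ is \emph{unbounded below} near $\{t=0\}$. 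Fix $\lambda>0$ and set $\phi(x):=\cos\!\big(\lambda(z-\varepsilon/\sqrt2)\big)$. For $z\in[0,\varepsilon/\sqrt2]$ the argument of the cosine lies in $[-\lambda\varepsilon/\sqrt2,0]$, so provided $\lambda\varepsilon/\sqrt2<\pi/2$ the function $\phi$ is nondecreasing in $z$, satisfies $0<\cos(\lambda\varepsilon/\sqrt2)\le\phi\le1$, and obeys $\phi''=-\lambda^2\phi$. Using the chain rule $\Delta\phi=\phi''|\nabla z|^2+\phi'\,\Delta z$ and the fact that here $\phi'\ge0$ while $\Delta z\le0$, the first-order term has the \emph{favourable} sign, so (using $\tilde c\le\tilde c_+\le\|\tilde c_+\|_{L^\infty(H)}$ and $\phi>0$)
\[
\Delta\phi+\tilde c\,\phi\ \le\ \phi''+\|\tilde c_+\|_{L^\infty(H)}\,\phi\ =\ \big(\|\tilde c_+\|_{L^\infty(H)}-\lambda^2\big)\,\phi\qquad\text{in }\mathcal N_\varepsilon .
\]
Thus $\phi$ is a strict positive supersolution as soon as $\lambda^2>\|\tilde c_+\|_{L^\infty(H)}$, and this is compatible with $\lambda\varepsilon/\sqrt2<\pi/2$ precisely when $\|\tilde c_+\|_{L^\infty(H)}<\pi^2/(2\varepsilon^2)$, that is, when \eqref{smallep} holds with $C_m:=2/\pi^2$; one then fixes such a $\lambda$. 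The translation that makes $\phi$ monotone in $z$ is essential: since $\Delta z$ is unbounded, a non-monotone choice of $\phi$ would leave an uncontrolled term $\phi'\Delta z$ near $\{t=0\}$, and it is this favourable sign that lets $C_m$ be taken independent of $m$.

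With such a $\phi$, the maximum principle follows by the ground-state substitution. Given $v$ as in the statement, put $w:=v/\phi$; since $\phi$ is bounded between two positive constants, $w\in C^2(H)\cap C(\overline H)$, $w\le0$ on $\partial H$, and $\limsup_{x\in H,\,|x|\to\infty}w\le0$. If $w$ were positive somewhere, it would, thanks to this decay at infinity, attain a positive maximum at some point $x_0\in H$; for $x_0$ off $\{t=0\}$ we have $\nabla w(x_0)=0$, $\Delta w(x_0)\le0$, and evaluating the identity
\[
\Delta v+\tilde c\,v=\phi\,\Delta w+2\nabla\phi\cdot\nabla w+(\Delta\phi+\tilde c\,\phi)\,w
\]
at $x_0$ gives $0\le\phi(x_0)\Delta w(x_0)+(\Delta\phi+\tilde c\,\phi)(x_0)\,w(x_0)<0$, since $\Delta\phi+\tilde c\,\phi<0$ and $w(x_0)>0$ --- a contradiction. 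Hence $w\le0$, i.e. $v\le0$, in $H$.

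The point requiring the most care --- the main obstacle --- is that $\phi$, being a function of $z=(|x^1|-|x^2|)/\sqrt2$, is only Lipschitz across $\{t=0\}=\{x^2=0\}$, with a conical singularity there, so the pointwise computation of $\Delta\phi$ is not valid on $\{t=0\}$ and the maximum of $w$ could in principle sit there. This is benign: away from $\{t=0\}$ one has the classical inequality above, while the only distributional term that $\Delta\phi$ carries on $\{t=0\}$ --- which is present only when $m=1$ --- is a \emph{nonpositive} measure, again precisely because $\phi$ is nondecreasing in $z$; hence $\phi$ is a weak supersolution on all of $\mathcal N_\varepsilon$, and $v\le0$ follows from the weak comparison principle against a positive supersolution that is bounded below (the same argument as above, now in the weak sense). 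Alternatively, when the data $\tilde c$ and $H$ are radial, as in all the applications in this paper, one reduces to the $(s,t)$ half-plane, where $\phi$ is a smooth function of $(s,t)$ up to $\{t=0\}$, the equation becomes \eqref{eqst} with the singular weight $(m-1)/t$, and any restriction of a $C^1$ radial function satisfies the Neumann condition $v_t=0$ on $\{t=0\}$; since $\phi_t=\tfrac{\lambda}{\sqrt2}\sin\!\big(\tfrac{\lambda}{\sqrt2}(s-t-\varepsilon)\big)<0$ on $\{t=0\}\cap\mathcal N_\varepsilon$, one gets $w_t=-v\,\phi_t/\phi^2>0$ at any interior maximum of $w$ lying there, which is impossible. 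Apart from this, the only remaining point is the non-compactness of $\mathcal N_\varepsilon$, handled through the hypothesis $\limsup_{|x|\to\infty}v\le0$ exactly as in the bounded-domain version of \cite{BNV}.
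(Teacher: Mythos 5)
Your proof is correct and follows essentially the same route as the paper's second proof of this lemma: a positive supersolution depending only on $z$ and increasing in $z$ (so that the unbounded drift term $\phi'\,\Delta z$ has the favourable sign), combined with the ground-state substitution $w=v/\phi$, and with the maximum of $w$ excluded from $\{t=0\}$ by the concave conical kink of $\phi$ there --- which is exactly condition \eqref{hessian} of Lemma~\ref{mpsuper}. The only differences are cosmetic: you take $\cos\bigl(\lambda(z-\varepsilon/\sqrt2)\bigr)$ where the paper takes the quadratic $(z+\varepsilon)(3\varepsilon-z)$, which incidentally gives you the slightly better constant $C_m=2/\pi^2$ in place of $3$.
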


The constant $C_m$ can be taken to be equal to $3$ for all $m$ ---this will
be seen below, in our second proof of the lemma.
Note that in this result $\tilde{c}$ is allowed to change sign ---in contrast with
Proposition~\ref{mpholds}. 

Using Lemma~\ref{mpnarrow} we can now prove
Proposition~\ref{mpholds}.

\begin{proof}[Proof of Proposition~\ref{mpholds}]
Let $u$ be a saddle-shaped solution of \eqref{eq} and let
$$
Lv:=L_u v+c(x)v=\Delta v+ \{f'(u(x))+c(x)\} v.
$$ 
Since $c\leq 0$ in~$\Omega\subset{\mathcal O}$, then $f'(u)+c\leq f'(u)\leq 
\max_{[0,1]}f'=f'(0)$ in~$\Omega$.
Choosing $\varepsilon =(2C_m f'(0))^{-1/2}$, Lemma~\ref{mpnarrow} states that
the maximum principle holds
for the operator $L$ in any open subset of ${\mathcal N}_\varepsilon$.

Let 
$$
\Omega_\varepsilon := \Omega\cap{\mathcal O}_\varepsilon= \Omega\cap
\{ s>t+\varepsilon \}
$$
and
$$
H_\varepsilon := \Omega\cap{\mathcal N}_\varepsilon= \Omega\cap
\{ t<s<t+\varepsilon \}\subset {\mathcal N}_\varepsilon.
$$
Note that
\begin{equation}\label{boundomega}
\partial \Omega_\varepsilon \subset \partial\Omega \cup (\Omega\cap \{ s=t+\varepsilon \} ),
\end{equation}
\begin{equation}\label{boundaries}
\partial H_\varepsilon \subset \partial\Omega \cup (\Omega\cap \{ s=t+\varepsilon \} )
\subset \partial\Omega \cup \overline{\Omega}_\varepsilon 
\end{equation}
since $\Omega\cap\{s=t\}=\varnothing$, and
\begin{equation}\label{full}
\overline\Omega = \overline{\Omega}_\varepsilon \cup \overline{H}_\varepsilon. 
\end{equation}

Recall that $u>0$ in $\Omega\subset {\mathcal O}$ and that by 
\eqref{positu} we know that
\begin{equation}\label{posstr}
u\geq \delta >0 \quad\text{in } \Omega_\varepsilon
\end{equation}
for some constant $\delta>0$. In addition, by \eqref{possuper},
\begin{equation}\label{stricL}
Lu=\Delta u +\{f'(u)+c\}u\leq \Delta u +f'(u)u < 0\quad\text{ in } \Omega .
\end{equation}

Let $v \in C^{2}(\Omega)\cap
C(\overline\Omega)$, as in the definition of the maximum principle, satisfy
\begin{equation}\label{hypv2}
Lv \geq 0 \hbox{ in } \Omega, \quad v\le 0 \hbox{ on
} \partial\Omega,\hbox{\quad and \quad} \limsup_{x\in\Omega, |x|\to\infty} v(x)\le 0.
\end{equation}
Consider
$$
w:= \frac{v}{u} \quad \text{ in } \Omega .
$$
By \eqref{posstr} and the hypotheses \eqref{hypv2} on $v\in C(\overline\Omega)$,
$w$ is bounded above in~$\Omega_\varepsilon$. 

Assume that
\begin{equation}\label{contrad}
S:=\sup_{\overline{\Omega}_\varepsilon} w >0.
\end{equation}
Then, by the two last conditions in \eqref{hypv2} and by \eqref{boundomega}, 
this supremum must be achieved at a point $x_0\in \Omega_\varepsilon \cup 
(\Omega\cap\{ s=t+\varepsilon \})\subset \Omega$.

We have that $v-Su\leq 0$ in $\overline{\Omega}_\varepsilon$. Therefore,
$v-Su$ is a subsolution for $L$ in $H_\varepsilon$ and nonpositive on
$\partial H_\varepsilon$, by \eqref{boundaries}, and at infinity. Thus, 
the maximum principle in
$H_\varepsilon$, Lemma~\ref{mpnarrow}, 
leads to $v-Su\leq 0$ in $H_\varepsilon$ and hence, by \eqref{full}, also
$$
v-Su\leq 0\quad\text{ in } \overline{\Omega}.
$$

We deduce that $S= w(x_0)=\sup_{\overline{\Omega}_\varep} w= \sup_{\overline{\Omega}} w$,
and thus the point $x_0\in\Omega$ obtained before is an interior maximum of $w$.
Now, note that 
\begin{eqnarray*}
\textrm{div}(u^2\nabla w)& = &\textrm{div} (\nabla v  \; u-v \, \nabla u)=
\Delta v  \;  u - v  \, \Delta  u = Lv  \;   u - v  \, L  u\\
& \geq & - v  \, L  u\quad \text{ in } \Omega. 
\end{eqnarray*}
Hence
\begin{equation}\label{ineqOm}
\Delta w + 2 u^{-1} \nabla  u  \, \nabla w +  u^{-1}L  u \; w \geq 0
\quad \text{ in } \Omega.
\end{equation}
But at the interior point $x_0\in \Omega$ of maximum of $w$, we have
\begin{eqnarray*}
& (\Delta w + 2 u^{-1} \nabla  u  \, \nabla w +  u^{-1}L  u \; w)(x_0) \leq \\
& \hspace{3cm} \leq ( u^{-1}L  u  \; w)(x_0) = S  u^{-1}(x_0) L u (x_0) <0
\end{eqnarray*}
by \eqref{stricL}, a contradiction with \eqref{ineqOm}. Thus, \eqref{contrad}
does not hold.
We conclude $\sup_{\overline{\Omega}_\varep} w \leq 0$ and hence $v\leq 0$ in 
$\overline{\Omega}_\varep$.

Finally, arguing for $v$ exactly as done before for $v-Su$, 
we deduce that $v \leq 0$ on $\partial H_\varep$. Then,
the maximum principle in
$H_\varepsilon$ leads to
$v\leq 0$ in $H_\varep$, and thus also in all $\Omega$ by \eqref{full}.
\end{proof}

The domain ${\mathcal N}_\varepsilon=\{t<s<t+\varep\}$ is a ``narrow'' domain in the sense
of \cite{BNV}, and thus Lemma \ref{mpnarrow} follows from
a very general maximum principle in ``narrow" domains
due to Berestycki, Nirenberg, and Varadhan~\cite{BNV}.
However, for completeness, below we give two different simple proofs 
of the lemma. First, let us explain what ``narrow'' means and why
the lemma follows from results of \cite{BNV, Ctop}.

Let $H\subset {\mathcal N}_\varep\subset \{t<s<t+\varep\}$ be an open set, 
as in Lemma~\ref{mpnarrow}.
Let $x$ be any point in $H$. It is simple to check that the distance from $x$ to the Simons cone
${\mathcal C}$ is given by the $z$ coordinate of $x$, i.e., by
$(s_x-t_x)/\sqrt{2}$; see Lemma~4.2 in \cite{CT1}. Hence, there exists 
a point $\overline{x}\in {\mathcal C}$  such that
$|x-\overline{x}|= (s_x-t_x)/\sqrt{2} < \varep/\sqrt{2}< (3/4)\varep$. Thus
$$
B_{\varep/4}(\overline{x})\setminus {\mathcal O} \subset
B_{\varep/4}(\overline{x})\setminus H \subset
B_{\varep}(x)\setminus H,
$$
and hence, since $\overline{x}\in {\mathcal C}$,
\begin{equation}\label{measout}
\begin{array}{rl}
2^{-1-4m} |B_{\varep}(x)| =& (1/2) |B_{\varep/4}(\overline{x})| =
|B_{\varep/4}(\overline{x})\setminus {\mathcal O}|
\vspace{1.5mm} \\
 \leq & |B_{\varep}(x)\setminus H|.
\end{array}
\end{equation}
Lemma~\ref{mpnarrow} now follows from Definitions~2.2 and 5.1 and
Theorem~5.2(i) of \cite{Ctop} ---a slightly more general version than
the result of \cite{BNV} to include unbounded domains. The specific
dependence \eqref{smallep} is the same as the one obtained in the proof 
of Theorem~5.2(i) of \cite{Ctop}.

Nevertheless, for completeness we present next two proofs 
of Lemma \ref{mpnarrow}. The first one follows
the proof in \cite{Ctop} and it was found by the author in \cite{Cabp}.
Replacing its technical tools (the mean value inequality for superharmonic
functions used below by the Krylov-Safonov weak Harnack inequality),
it applies to general  ``narrow'' domains and 
to operators in non-divergence form with bounded measurable coefficients.
Instead, our second proof will use strongly the specific ``shape'' of the domain 
${\mathcal N}_\varep$.

\begin{proof}[First proof of Lemma \ref{mpnarrow}]
Let $H\subset {\mathcal N}_\varep\subset \{t<s<t+\varep\}$ be an open set and 
$v\in C^{2}(H)\cap C(\overline H)$ satisfy
$$
Lv:=\Delta v+ \tilde{c}(x)v \geq 0 \,\hbox{ in } H, \,\, v\le 0 \hbox{ on
} \partial H,\hbox{ and } \limsup_{x\in H, |x|\to\infty}
v(x)\le 0.
$$
Arguing by contradiction, assume that $\sup_H v >0$. It follows that
the supremum of $v$ is achieved at some point $x_0 \in H$:
$$
\sup_H v = v(x_0) > 0.
$$

Let 
$$
K:=\|\tilde{c}_+\|_{L^\infty (H)}
$$
and 
$$ 
\phi (x) := (4m)^{-1} K v(x_0) (\varep^2-|x-x_0|^2)  \quad\text{ for } x\in 
\R^{2m}.
$$
Consider now the open set $H\cap\{v>0\}$. We have 
$$
-\Delta v \leq \tilde{c} v \leq \|\tilde{c}_+\|_{L^\infty (H)} v=Kv\leq K v(x_0)
=-\Delta \phi \quad\text{ in } H\cap\{v>0\}.
$$
Thus, $v-\phi$ is subharmonic in $B_\varep(x_0)\cap \left( H\cap\{v>0\}\right)$.
In addition, on $B_\varep(x_0)\cap \partial \left( H\cap\{v>0\}\right)$ 
we have $v-\phi \leq v \leq 0$. Thus, its positive part $(v-\phi)_+$, extended to be zero
in $B_\varep(x_0)\setminus \left( H\cap\{v>0\}\right)$,
is a continuous function which is subharmonic in the viscosity sense
(or in the distributional sense)
in $B_\varep(x_0)$.

We apply to $w:= v(x_0)-(v-\phi)_+$ the mean value inequality in the ball $B_\varep(x_0)$
for superharmonic functions in the viscosity (or distributional) sense.
Note also that $w> 0$ in $B_\varep(x_0)$
and recall the ``narrowness'' condition
\eqref{measout} with  $x=x_0\in H$. We have
\begin{eqnarray*}
2^{-1-4m} v(x_0) & \leq & \frac{|B_{\varep}(x_0)\setminus \left( H\cap\{v>0\}\right)|}
{|B_{\varep}(x_0)|} v(x_0)\\
&=& \frac{1}{|B_{\varep}(x_0)|} \int_{B_{\varep}(x_0)\setminus \left( H\cap\{v>0\}\right)} w \\
& \leq & \frac{1}{|B_{\varep}(x_0)|} \int_{B_{\varep}(x_0)} w \leq w(x_0)\\
& = & v(x_0)-(v(x_0)-\phi(x_0))_+ \\
&\leq &  \phi(x_0) = (4m)^{-1} \varep^2 K v(x_0).
\end{eqnarray*}
Thus, since we assumed $\sup_H v = v(x_0) > 0$, we get a contradiction 
whenever $(4m)^{-1}  \varep^2 K < 2^{-1-4m}$.
\end{proof}

The rest of this section is devoted to give another simple proof of
Lemma~\ref{mpnarrow}. In contrast with the previous one, the following proof is based on the
specific form of the domain ${\mathcal N}_\varep$. 

To give the proof,
we first need to establish the following easy result.

\begin{lemma}\label{mpsuper}
Let $H$ be an open set of $\R^n$, $\tilde{c}\in C(H)$, and $L=\Delta + \tilde{c}(x)$.
Assume that there exists a
function $\phi \in C(\overline H)$ (not necessarily bounded above)
such that 
\begin{equation*}
\phi \geq \delta >0 \quad\text{ in } \overline H
\end{equation*}
for some constant $\delta$. Assume also that there exists an open set $A\subset  H$
such that $\phi \in C^{2} (A)$,
\begin{equation}\label{strictsup}
L \phi <0 \quad \text{ in } A,
\end{equation}
and 
\begin{equation}\label{hessian}
\liminf_{\xi\to 0} \frac{\phi (x_0+\xi)+\phi (x_0-\xi)-2\phi (x_0)}{|\xi|^2} = -\infty
\quad\text{ for all } x_0\in  H\setminus A.
\end{equation}

Then, the maximum principle holds for $L$ in $ H$.
\end{lemma}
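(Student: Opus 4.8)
The plan is to run the classical ``divide by a positive supersolution'' argument with the quotient $w:=v/\phi$, allowing $\phi$ to fail to be twice differentiable precisely on $H\setminus A$ and using hypothesis \eqref{hessian} to compensate. So let $v\in C^{2}(H)\cap C(\overline H)$ be as in Definition~\ref{defmp}, i.e. $Lv\geq 0$ in $H$, $v\leq 0$ on $\partial H$, and $\limsup_{x\in H,\,|x|\to\infty}v(x)\leq 0$, and set $w:=v/\phi\in C(\overline H)$, which is well defined since $\phi\geq\delta>0$. Arguing by contradiction, I would assume $M:=\sup_H w>0$ and first check that this supremum is attained at an interior point $x_0\in H$. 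Indeed, along a maximizing sequence $x_k$ one has $v(x_k)=w(x_k)\phi(x_k)\geq\tfrac12 M\delta>0$ for $k$ large; by continuity of $v$ on $\overline H$ this rules out that $x_k$ accumulates on $\partial H$, and by the hypothesis at infinity it rules out $|x_k|\to\infty$, so up to a subsequence $x_k\to x_0\in H$ with $w(x_0)=M$. Then I would split into the two cases $x_0\in A$ and $x_0\in H\setminus A$.

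If $x_0\in A$, then $\phi\in C^{2}$ near $x_0$, hence $w\in C^{2}$ near $x_0$, and at the interior maximum $\nabla w(x_0)=0$ and $\Delta w(x_0)\leq 0$. From $\nabla w(x_0)=0$ one gets $\nabla v=w\nabla\phi$ at $x_0$, and using the algebraic identity $\Delta v-w\Delta\phi=Lv-wL\phi$ (which follows from $\Delta=L-\tilde c$ together with $w\phi=v$, the $\tilde c$-terms cancelling) a short computation yields
\[
\Delta w(x_0)=\frac{1}{\phi(x_0)}\bigl(Lv(x_0)-M\,L\phi(x_0)\bigr).
\]
Since $Lv(x_0)\geq 0$, $M>0$, $\phi(x_0)>0$, and $L\phi(x_0)<0$ by \eqref{strictsup}, the right-hand side is strictly positive, contradicting $\Delta w(x_0)\leq 0$.

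If instead $x_0\in H\setminus A$, then $\phi$ is only continuous near $x_0$, and this is exactly where \eqref{hessian} is used. From $w(x_0\pm\xi)\leq M=w(x_0)$ and $\phi>0$ we obtain $v(x_0\pm\xi)\leq M\phi(x_0\pm\xi)$, with equality at $\xi=0$, hence
\[
v(x_0+\xi)+v(x_0-\xi)-2v(x_0)\leq M\bigl(\phi(x_0+\xi)+\phi(x_0-\xi)-2\phi(x_0)\bigr).
\]
Dividing by $|\xi|^2$ and letting $\xi\to 0$, the right-hand side has $\liminf$ equal to $-\infty$ by \eqref{hessian} (as $M>0$), whereas the left-hand side stays bounded below by $-\|D^2v(x_0)\|+o(1)$ since $v\in C^{2}$ near $x_0$; this is a contradiction. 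In both cases the assumption $M>0$ is impossible, so $\sup_H w\leq 0$, i.e. $v\leq 0$ in $H$, which is the maximum principle.

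I expect the only genuinely delicate points to be, first, verifying that the supremum of $w$ is really attained at an interior point of $H$ even when $\phi$ is unbounded above --- handled above via the uniform lower bound $\phi\geq\delta$ --- and, second, the bookkeeping in the $x_0\in A$ computation, where one must check that the first-order and the $\tilde c$-terms cancel; neither seems a serious obstacle. Conceptually, the role of \eqref{hessian} is precisely to forbid $v/\phi$ from having an interior maximum at a point where $\phi$ is not twice differentiable, so that the classical supersolution argument needs to be run only on $A$.
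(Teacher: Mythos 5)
Your proof is correct and follows essentially the same route as the paper's: form the quotient $w=v/\phi$, use \eqref{hessian} via the second-order incremental quotient comparison $v(x_0+\xi)+v(x_0-\xi)-2v(x_0)\le M\bigl(\phi(x_0+\xi)+\phi(x_0-\xi)-2\phi(x_0)\bigr)$ to force the maximum point into $A$, and then run the standard positive-supersolution contradiction there (the paper writes this last step via $\mathrm{div}(\phi^2\nabla w)$, but at the critical point it reduces to exactly your identity). Your more detailed justification that the supremum of $w$ is attained in the interior, using $\phi\ge\delta$ together with the boundary and infinity conditions on $v$, is a correct filling-in of a step the paper states more briefly.
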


Even if it could be relaxed, note the strict inequality in \eqref{strictsup}.

Condition \eqref{hessian} prevents the function $\phi$ to be ``touched by
below'' at the point $x_0$ by a $C^2$ function (see the proof of the lemma for details).
As an example, the function $\phi(x)=-|x|$ satisfies \eqref{hessian} at $x_0=0$.
Another example appearing in applications is the distance function to a given point $p$
in a Riemannian manifold; it satisfies \eqref{hessian} at points $x_0$ in the cut locus
of $p$ (see \cite{Cnon}). It also occurs with the distance to the boundary 
$\partial H$ in an open
set $H$ of $\R^n$ at a cut point $x_0$ in $ H$ (see \cite{LN}).

\begin{proof}[Proof of Lemma \ref{mpsuper}]
Let $H\subset \R^n$ be an open set and 
$v\in C^{2}(H)\cap C(\overline H)$ satisfy
$$
Lv=\Delta v+ \tilde{c}(x)v \geq 0 \,\hbox{ in } H, \,\, v\le 0 \hbox{ on
} \partial H,\hbox{ and } \limsup_{x\in H, |x|\to\infty}
v(x)\le 0.
$$
Consider the function
$$
w:= \frac{v}{\phi},
$$
with $\phi$ as in Lemma \ref{mpsuper}. We have that $w$ is a continuous function in 
$\overline  H$ satisfying $w\le 0$ on $\partial H$ and 
$\limsup_{x\in H, |x|\to\infty} w(x)\le 0$.
Thus, $w$ is bounded above. 

Arguing by contradiction, assume that
$S:=\sup_ H w >0$. This supremum will be achieved at some point $x_0\in { H}$,
by the nonpositiveness of the limsup of $w$ at infinity.

We claim that $x_0\in A$. Indeed, we have that
$$
v\leq S \phi \quad\text{in } A \quad\text{ and }\quad  v(x_0)= S \phi(x_0). 
$$ 
It follows that the liminf for $\phi$ in \eqref{hessian} 
is greater than or equal to the same liminf for $S^{-1} v$, which is finite 
since $v\in C^2( H)$. By \eqref{hessian}, we conclude that $x_0\in A$.

Now, $v$, $\phi$, and $w$ are $C^2$ in $A$ and we have
\begin{eqnarray*}
\textrm{div}(\phi^2\nabla w)& = &\textrm{div} (\nabla v  \; \phi-v \, \nabla\phi)=
\Delta v  \; \phi - v  \, \Delta \phi = Lv  \;  \phi - v  \, L \phi\\
& \geq & - v  \, L \phi. 
\end{eqnarray*}
Hence
\begin{equation}\label{ineqA}
\Delta w + 2\phi^{-1} \nabla \phi  \, \nabla w + \phi^{-1}L \phi \; w \geq 0
\quad \text{ in } A.
\end{equation}
But at the point $x_0\in A$ of maximum of $w$, we have
\begin{eqnarray*}
& (\Delta w + 2\phi^{-1} \nabla \phi  \, \nabla w + \phi^{-1}L \phi \; w)(x_0) \leq \\
& \hspace{3cm} \leq (\phi^{-1}L \phi  \; w)(x_0) = S \phi^{-1}(x_0) L \phi (x_0) <0
\end{eqnarray*}
by \eqref{strictsup}, a contradiction with \eqref{ineqA}.

Thus, $\sup_H w \leq 0$ and hence $v\leq 0$ in $ H$.
\end{proof}

We can now give the second proof of the maximum principle in ${\mathcal N}_\varepsilon$.

\begin{proof}[Second proof of Lemma \ref{mpnarrow}]
Assume that
$$
3\varep^2 \|\tilde{c}_+\|_{L^\infty (H)} <1.
$$

We apply Lemma~\ref{mpsuper} with the choice
\begin{align}\label{expst}
\phi(x)= \phi (z) & := (z+\varepsilon)(3\varepsilon-z)=3\varepsilon^2+
2\varepsilon z- z^2\nonumber \\
& = 3\varepsilon^2+ \frac{2\varepsilon}{\sqrt 2} (s-t)- \frac{s^2+t^2-2st}{2}.
\end{align}
Note that $0<z<\varepsilon /\sqrt{2}<\varepsilon$ in ${\mathcal N}_\varepsilon$, and thus
$$
2\varepsilon^2\leq \phi\leq 6\varepsilon^2 \quad\text{in } {\mathcal N}_\varepsilon.
$$

For the set $A$ in Lemma~\ref{mpsuper} we choose
$$
A=H\cap \{ 0< t<s<t+\varepsilon\},
$$ 
and thus 
$$ 
H\setminus A\subset \{ t=0 \text{ and } 0<s<\varepsilon\}.
$$
Given a point $x_0\in H\setminus A$, since $x_0$ is a point with the $t$ coordinate
$t_0=0$ and with the $s$ coordinate $0<s_0<\varepsilon$, 
\eqref{expst} shows that in a neighborhood of $x_0$ 
the function $\phi$ is equal to a smooth function plus
$$
(-\sqrt{2}\varepsilon +s)t.
$$
Since $-\sqrt{2}\varepsilon + s_0< -\sqrt{2}\varepsilon +\varepsilon <0$,
considering second order incremental quotients in the $t$ variable, we see that
the liminf in \eqref{hessian} for this function at the point
$x_0$ is equal to $-\infty$. Thus, the same holds for~$\phi$.

Next, we have that $\phi\in C^2(A)$ and, in $A$,
$\phi_z=2\varepsilon-2z\geq 0$ and $\phi_{zz}=-2$.
Using expression \eqref{eqyz} to compute the Laplacian, we have
\begin{equation*}
\Delta \phi= \phi_{zz}-\frac{2(m-1)}{y^2-z^2}z\phi_z\quad\text{ in } A .
\end{equation*}
Hence,
$$
\Delta \phi +\tilde{c}\phi\leq \phi_{zz}+\tilde{c}\phi\leq -2+ 6 \varepsilon^2 
\|\tilde{c}_+\|_{L^\infty (H)} < 0 
\quad\text{in } A.
$$
This finishes the proof.
\end{proof}

\section{Uniqueness of saddle-shaped solution}

In this section we prove our uniqueness result, Theorem~\ref{unique}.
We use the maximum principle of the previous section and also the following
simple result.

\begin{lemma}\label{min}
Assume that $f$ satisfies \eqref{hypf} and that $u_1$ and $u_2$ are two
saddle-shaped solutions of \eqref{eq}, where $2m\geq 2$. Then,
there exists a saddle-shaped solution $u$ of 
\eqref{eq} such that
\begin{equation}\label{min2}
u \leq u_1 \quad \text{and}\quad u\leq u_2 \quad \text{ in } {\mathcal{O}}=\{s>t\}.
\end{equation}
\end{lemma}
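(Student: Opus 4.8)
The plan is to use $w:=\min(u_1,u_2)$ as an upper barrier and to construct a saddle-shaped solution lying below it, by a monotone iteration in ${\mathcal O}$ of exactly the type used to build saddle-shaped solutions in \cite{CT1,CT2}. First note that since $u_1$ and $u_2$ are classical solutions of \eqref{eq}, each is a supersolution, and the minimum of two supersolutions of $-\Delta v=f(v)$ is again a (viscosity, hence distributional) supersolution: if $\phi\in C^2$ touches $w$ from below at $x_0$ and $w(x_0)=u_i(x_0)$, then $\phi$ touches $u_i$ from below there, so $-\Delta\phi(x_0)\ge-\Delta u_i(x_0)=f(u_i(x_0))=f(\phi(x_0))$. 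Thus $w$ is a bounded supersolution in ${\mathcal O}$, depending only on $(s,t)$, with $0<w<1$ in ${\mathcal O}$ and $w\equiv 0$ on $\partial{\mathcal O}$ (recall $u_i$ is odd with respect to $\{s=t\}$).

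The main point is to rule out that the iteration collapses to the trivial solution, and for this I would first build a fixed positive subsolution $\underline{u}_0$ with $0\le\underline{u}_0\le w$. Since ${\mathcal O}=\{|x^1|>|x^2|\}$ contains balls of arbitrarily large radius, for a fixed small $r>0$ and $R_0$ large the set $\Omega_0:={\mathcal O}\cap(B_{R_0}\setminus\overline{B_r})$ is a bounded open set, invariant under $O(m)\times O(m)$, whose first Dirichlet eigenvalue satisfies $\lambda_1(\Omega_0)<f'(0)=\max_{[0,1]}f'$ (here I use $f''<0$). Let $\psi>0$ be a first eigenfunction in $\Omega_0$; being unique up to scaling, it depends only on $(s,t)$. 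Since $\rho\mapsto f(\rho)/\rho$ is nonincreasing on $(0,1)$ with $f(\rho)/\rho\to f'(0)>\lambda_1(\Omega_0)$ as $\rho\to0^+$, for $\varep>0$ small the function $\underline{u}_0:=\varep\psi$ on $\Omega_0$, extended by $0$ to $\R^{2m}$, is a continuous subsolution of $-\Delta v=f(v)$ in $\R^{2m}$, it depends only on $(s,t)$, it vanishes on $\partial{\mathcal O}$, and it is positive on the nonempty set $\Omega_0\subset{\mathcal O}$. Shrinking $\varep$ further gives $\underline{u}_0\le w$ in ${\mathcal O}$: away from $\partial{\mathcal O}$ because $w>0$ there and $\psi$ is bounded, and near the compact smooth portion $\partial{\mathcal O}\cap\overline{\Omega_0}$ of $\partial{\mathcal O}$ (which avoids the vertex $0$) because $w\ge c\,\mathrm{dist}(\cdot,\partial{\mathcal O})$ by the Hopf lemma applied to $-\Delta u_i=f(u_i)\ge0$, while $\psi\le C\,\mathrm{dist}(\cdot,\partial{\mathcal O})$ by boundary regularity.

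Now set $M:=\|f'\|_{L^\infty([0,1])}$, so $g(\rho):=f(\rho)+M\rho$ is nondecreasing on $[0,1]$, let $v_0:=w$, and let $v_{k+1}$ be the bounded solution of $-\Delta v_{k+1}+Mv_{k+1}=g(v_k)$ in ${\mathcal O}$ with $v_{k+1}=0$ on $\partial{\mathcal O}$ (obtained by exhausting ${\mathcal O}$ with ${\mathcal O}\cap B_R$; the vertex is a regular boundary point by the exterior cone condition). Using that $w$ is a supersolution, $\underline{u}_0$ a subsolution, and $g$ nondecreasing, induction gives $\underline{u}_0\le\cdots\le v_{k+1}\le v_k\le\cdots\le v_0=w$ with each $v_k$ depending only on $(s,t)$ (uniqueness of the linear problem and symmetry of the data). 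Hence $v_k\downarrow u$ for some $u$ with $\underline{u}_0\le u\le w$ in ${\mathcal O}$; by local elliptic estimates the convergence holds in $C^2_{\mathrm{loc}}({\mathcal O})$ and $-\Delta u=f(u)$ in ${\mathcal O}$. Since $0\le u\le w$ and $w$ is continuous with $w\equiv0$ on $\partial{\mathcal O}$, the function $u$ extends continuously by $0$ to $\overline{\mathcal O}$. Extending it oddly by $\hat u(s,t):=-u(t,s)$ for $s<t$ and invoking the standard reflection and removable-singularity arguments across $\{s=t\}$ (as in \cite{CT1,CT2}, using that $f$ is odd and that the vertex is a point in $\R^{2m}$, $2m\ge2$), one gets $\hat u\in C^2(\R^{2m})$ solving \eqref{eq}, depending only on $(s,t)$, odd with respect to $\{s=t\}$; moreover $|\hat u|<1$ since $0\le u\le w<1$, and $\hat u>0$ in ${\mathcal O}$ since $u\ge\underline{u}_0>0$ somewhere forces $u\not\equiv0$, and a nonnegative superharmonic function on the connected set ${\mathcal O}$ that is not identically zero is strictly positive. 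Thus $\hat u$ is a saddle-shaped solution and $\hat u=u\le w=\min(u_1,u_2)$ in ${\mathcal O}$, which is \eqref{min2}.

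The only genuine obstacle is the nontriviality of the limit $u$: both $w$ and the constant $0$ are compatible with the iteration, so without the barrier $\underline{u}_0$ one could end with $u\equiv0$. The delicate step in producing $\underline{u}_0$ is the inequality $\underline{u}_0\le w$ near the Simons cone, which forces one to compare the boundary vanishing rates of $w$ (at least linear, by Hopf) and of the eigenfunction $\psi$ (at most linear, by boundary regularity on the smooth part of $\partial{\mathcal O}$), and this is precisely why the vertex is excised by the ball $B_r$.
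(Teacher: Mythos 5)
Your proposal is correct and its skeleton coincides with the paper's: take $w=\min\{u_1,u_2\}$ as a supersolution, produce a solution of \eqref{eq} trapped between $0$ and $w$ in ${\mathcal O}$, and extend it oddly across the Simons cone. The genuine divergence is in the two technical steps. For the construction, the paper minimizes the energy ${\mathcal E}(\cdot,{\mathcal O}\cap B_R(0))$ over the convex set $K_w$ of $(s,t)$-dependent functions squeezed between $0$ and $w$ and equal to $w$ on $\partial({\mathcal O}\cap B_R(0))$, then lets $R\to\infty$; you instead run a monotone iteration downward from $w$ in all of ${\mathcal O}$ with zero boundary data (an alternative the paper itself flags). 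The more substantive difference is how the limit is shown not to vanish identically. The paper argues by stability: each $u_R$ is a positive strict supersolution of its own linearized operator, whence $Q_{u_R}\geq 0$ on test functions supported in ${\mathcal O}\cap B_R(0)$; in the limit $Q_u\geq 0$ on $C^\infty_c({\mathcal O})$, which is impossible for $u\equiv 0$ because $-\Delta-f'(0)$ has negative first eigenvalue in the arbitrarily large balls contained in ${\mathcal O}$. You instead pin an explicit positive subsolution $\varepsilon\psi$ (a scaled first eigenfunction of a subdomain with $\lambda_1<f'(0)$) below the entire iteration --- exactly the alternative route the paper mentions after its proof, citing Remark~3.6 of \cite{CT2}. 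Your version is more elementary and constructive, but its delicate step, as you note, is the comparison $\varepsilon\psi\leq w$ near ${\mathcal C}$, which drags in Hopf-lemma lower bounds for $w$ and boundary (and corner) regularity for $\psi$ on the Lipschitz domain $\Omega_0$. That difficulty is self-inflicted: since ${\mathcal O}$ contains balls of arbitrarily large radius, you may take $\Omega_0$ to be such a ball compactly contained in ${\mathcal O}$; then $w$ has a positive minimum on $\overline{\Omega_0}$ and $\varepsilon\psi\leq w$ holds trivially for small $\varepsilon$, with no Hopf lemma and no corner analysis. With that simplification (or as written, granting the standard regularity of $\psi$ at the orthogonal intersections of ${\mathcal C}$ with the spheres), the argument is complete.
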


This result follows from a more general one: Proposition~3.8 of~\cite{CT2} 
on the existence of a minimal saddle-shaped solution, i.e.,
smaller than or equal to any other saddle-shaped solution in ${\mathcal{O}}$.
However, the statement of Lemma~\ref{min} suffices for our purposes here and,
for completeness, we give next a simple proof of it.

\begin{proof}[Proof of Lemma \ref{min}]
Let
$$
w:=\min\{u_1,u_2\}  \quad \text{ in } \overline{\mathcal{O}},
$$
an $H^1$ function locally in $\overline{\mathcal{O}}$ and positive in ${\mathcal{O}}$. 

For $R>0$, consider the problem
\begin{equation}
\label{probR}
\left\{\begin{array}{rcll}
-\Delta u_R & = & f(u_R) & \text{ in } {\mathcal O}\cap B_R(0)\\
u_R & = & w & \text{ on } \partial ({\mathcal O}\cap B_R(0)).
\end{array}\right.
\end{equation}
By its definition, $w$ is a weak supersolution of \eqref{probR},
while $0$ is clearly a subsolution. As a consequence, there exists a weak solution
$u_R$ of \eqref{probR} with $0\leq u_R\leq w$.
It can be taken to be a minimizer of the energy
functional ${\mathcal E}(\cdot,{\mathcal O}\cap B_R(0))$, defined by \eqref{energia},
in the convex set
\begin{eqnarray*}
& \hspace{-3.6cm} K_w:=\Big\{v\in H^1({\mathcal O}\cap B_R(0))\, :\, v=v(s,t) \text{ a.e., } \\
& \hspace{1.4cm} 
0\leq v\leq w \text{ in } 
{\mathcal O}\cap B_R(0),
\text{ and } v\equiv w \text{ on } \partial({\mathcal O}\cap B_R(0))\Big\}
\end{eqnarray*}
of functions of $s$ and $t$ only. Note that $K_w$ is weakly
closed in $H^1({\mathcal O}\cap B_R(0))$. For more details, see the proofs of 
Theorem~1.3 in~\cite{CT1} and of Theorem~2.4 in~\cite{St}. The set
${\mathcal O}\cap B_R(0)$ not being Lipschitz at the origin (when $2m\geq 4$)
may be avoided removing from it
a small ball $B_\varep (0)$, minimizing here, and then letting $\varep\to 0$. 

Since $0$
is not a weak solution of \eqref{probR}, the strong maximum principle leads to
$$
0<u_R=u_R(s,t)\leq w=w(s,t) \quad \text{ in } {\mathcal O}\cap B_R(0).
$$

Next, by elliptic estimates and the Arzela-Ascoli theorem (see \cite{CT1,CT2} for more
details), the limit as $R\to\infty$ of $u_{R}$ 
exists (up to subsequences) in every compact set of $\overline {\mathcal O}$. 
We obtain a solution $u$ of $-\Delta u= f(u)$ in
${\mathcal O}=\{s>t\}$ such that $u=0$ on ${\mathcal C}$ and
$0\leq u \leq w$ in ${\mathcal O}$.
Reflecting $u=u(s,t)$ to be odd with respect to the Simons cone,
we obtain a solution $u=u(s,t)$ of \eqref{eq} in all of $\R^{2m}$
satisfying \eqref{min2}.

To finish the proof it remains to show that $u>0$ in ${\mathcal O}$. This
will ensure that $u$ is a saddle-shaped solution. 
We use the argument in \eqref{possuper}; it gives that $u_R>0$ is a positive supersolution of the 
linearized operator $\Delta + f'(u_R)$ in ${\mathcal O}\cap B_R(0)$.
As a consequence (see section~2) the maximum principle holds for this operator
in compact subdomains of ${\mathcal O}\cap B_R(0)$, and hence its first Dirichlet eigenvalue
in these domains is positive. We deduce, by Rayleigh criterion, that $Q_{u_R}(\xi)\geq 0$ 
for every smooth function $\xi$ with
compact support in ${\mathcal O}\cap B_R(0)$ ---recall that $Q_{u_R}$
is defined in \eqref{stable}. The conclusion $Q_{u_R}(\xi)\geq 0$ could also
been verified in a different, very simple way. Simply use that 
$u_R$ is a positive supersolution of the 
linearized operator and the integration by parts argument preceding \eqref{stabparts}
in section~5.

Now, letting $R\to\infty$, we are led to 
$Q_{u}(\xi)\geq 0$ for all smooth functions $\xi$ with
compact support in ${\mathcal O}$. This would be a contradiction with
$u\equiv 0$ in ${\mathcal O}$, since in such case $f'(u)=f'(0)$ is a positive constant
and hence $-\Delta-f'(0)$ is not a nonnegative operator in balls
of ${\mathcal O}$ with sufficiently large radius.

Therefore, $u\geq 0$ and $u\not \equiv 0$ in ${\mathcal O}$. It follows that
$u>0$ in ${\mathcal O}$, by the strong maximum principle.
\end{proof}

The existence of the solution $u_R$ in the above proof could also
be shown by the monotone iteration procedure; see \cite{CT2}.
On the other hand, the fact that $u>0$ in ${\mathcal O}$ could also
be proved placing an explicit subsolution below all $u_R$; see 
Remark~3.6 in \cite{CT2}.

We finish this section proving our uniqueness result.

\begin{proof}[Proof of Theorem \ref{unique}]
Let $u_1$ and $u_2$ be two saddle-shaped solutions of \eqref{eq}. 
Let $u$ be the saddle-shaped solution of Lemma~\ref{min}.
Consider the difference $v:= u_i-u$ for $i=1$ and $i=2$.
We have that
$$
-\Delta (u_i-u) = f(u_i)-f(u) \leq f'(u) (u_i-u)\quad\text{ in } {\mathcal{O}}=\{s>t\},
$$
since in this set $u\leq u_i$ and $f$ is concave in $(0,1)$. Thus,
$$
L_{u} (u_i-u) := \{ \Delta + f'(u(x)) \} (u_i-u) \geq 0 \quad\text{ in }
{\mathcal{O}}=\{s>t\}.
$$
In addition, we have that $u_i- u\equiv 0$ on 
${\mathcal{C}}=\partial{\mathcal{O}}$ and 
$$
\limsup_{x\in{\mathcal O}, |x|\to\infty} (u_i-u)(x)= 0
$$ 
by the asymptotic result \eqref{unif} applied to both
$u_i$ and $u$. 

To the saddle-shaped solution $u$, we apply 
the maximum principle of Proposition~\ref{mpsaddle} ---a particular case of 
Proposition~\ref{mpholds} proven in the previous section.
We obtain that the maximum principle holds for 
$L_{u} = \Delta + f'(u(x))$ in
${\mathcal{O}}$. Since $v= u_i-u$ satisfies hypotheses \eqref{hypintroMP} by the 
above facts, we deduce
$u_i-u\leq 0$ in ${\mathcal{O}}$. Thus, 
by \eqref{min2}, $u_i-u\equiv 0$ in ${\mathcal{O}}$. Since this holds for
both $i=1$ and $i=2$, we deduce $u_1 \equiv u_2$, that is, uniqueness.
\end{proof}

\section{Monotonicity and convexity properties}

We start this section with some regularity issues needed in the subsequent.
Recall that we assume that $f\in C^{2,\alpha}$ for some $\alpha\in (0,1)$.
Let $u=u(x)$ be a bounded solution of \eqref{eq}. Since $f(u)\in L^\infty$,
it is also an $L^p$ function for all $1<p<\infty$ in every ball of radius $2$,
with a uniform bound on its $L^p$-norm in such balls. Thus,
$u\in W^{2,p}\subset C^{1,\alpha}$ (if $p$ is taken large enough) with uniform
bounds in every ball of radius~$1$ (i.e., with half the radius of the previous ones). 
Now, we have
$-\Delta u_{x_i}=f'(u) u_{x_i} \in  C^{\alpha}$ for all indexes~$i$, and hence
$ u_{x_i} \in C^{2,\alpha}$. But now we know
$-\Delta u_{x_i}=f'(u) u_{x_i} \in  C^{1,\alpha}$, and thus $ u_{x_i} \in C^{3,\alpha}$.
That is, we have 
\begin{equation}\label{bounded4}
u\in C^{4,\alpha} (\R^{2m}) \quad\text{and}\quad D^ku \in L^\infty(\R^{2m}) \ \text{ if }
0\leq |k|\leq 4.
\end{equation}

Assume now that $u=u(x)=u(s,t)$ is a bounded solution that depends only on $s$ and $t$
---as in the case of saddle-shaped solutions. For $\tilde{s}\in \R$ and 
$\tilde{t}\in \R$, let
$$
\tilde{u}(\tilde{s},\tilde{t}):=u(\tilde{s}, x_2=0,\ldots,x_m=0, \tilde{t},
x_{m+2}=0,\ldots,x_{2m}=0).
$$
Since $u\in C^4 (\R^{2m})$, we deduce that $\tilde{u}\in C^4 (\R^{2})$
and hence $u=u(s,t)$ is also a $C^4$ function of the variables
$s\geq 0$ and $t\geq 0$. Furthermore,  $u=u(s,t)$ is the restriction
to $(s,t)\in [0,\infty)\times [0,\infty)$ of a $C^4(\R^2)$ function
$\tilde{u}$ which is even in $s$ and in $t$. In particular we have
\begin{equation}\label{deriv10}
u_s=0 \quad\text{in } \{s=0\} \qquad \text{and} \qquad 
u_t=0 \quad\text{in } \{t=0\}.
\end{equation}
As a consequence,
\begin{equation}\label{deriv20}
u_{st}\in C^2(\R^{2m}) \quad\text{and}\quad u_{st}=0 \; \text{ in } \{st=0\}.
\end{equation}

To establish the statement $u_{st}>0$ in $\{s>t>0\}$ of Proposition~\ref{monot},
we need the following asymptotic result.

\begin{lemma}\label{lemmast}
Assume that $f$ satisfies \eqref{hypf}.
Let $u$ be the saddle-shaped solution 
of $-\Delta u= f(u)$ in~$\R^{2m}$, where $2m\geq 2$.

Then,
\begin{equation}
\label{unifst}
\vectornorm{D^2_{(s,t)} (u-U)}_{L^{\infty}(\{st>0, s^2+t^2 \geq R^2\})}
\longrightarrow 0\quad
\text{as }\, R\rightarrow\infty ,
\end{equation}
where $U$ is defined in \eqref{defU}.
\end{lemma}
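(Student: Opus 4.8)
\emph{Proof proposal.} The plan is to deduce \eqref{unifst} from the $C^0$ asymptotics of Theorem~\ref{exis-uns}(b) and the uniform bounds \eqref{bounded4}, by a translation–and–compactness argument in which the Simons cone flattens to a hyperplane at infinity. Suppose \eqref{unifst} fails: there are $\delta_0>0$ and points $x_k\in\R^{2m}$ with $s_kt_k>0$, $|x_k|=(s_k^2+t_k^2)^{1/2}\to\infty$, and $|D^2_{(s,t)}(u-U)(x_k)|\ge\delta_0$. Since $u(t,s)=-u(s,t)$ and $U(t,s)=-U(s,t)$, differentiating gives $w_{ss}(s,t)=-w_{tt}(t,s)$ and $w_{st}(s,t)=-w_{st}(t,s)$ for $w\in\{u,U,u-U\}$, so $|D^2_{(s,t)}w(s,t)|$ is invariant under exchanging $s$ and $t$; hence we may assume $t_k\le s_k$, so that $s_k\to\infty$, and by the rotational invariance of $u$ and $U$ in each $\R^m$ factor we may also assume $x_k=s_ke_1+t_ke_{m+1}$, where $e_1,\dots,e_{2m}$ is the standard basis of $\R^{2m}$. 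A first, crucial observation is that at such a point the chain rule gives the \emph{exact} identities $u_{ss}(s_k,t_k)=\partial^2_{x_1x_1}u(x_k)$, $u_{st}(s_k,t_k)=\partial^2_{x_1x_{m+1}}u(x_k)$, $u_{tt}(s_k,t_k)=\partial^2_{x_{m+1}x_{m+1}}u(x_k)$, and likewise for $U$; that is, $D^2_{(s,t)}(u-U)(x_k)$ is exactly the $2\times2$ submatrix of the Euclidean Hessian $D^2_x(u-U)(x_k)$ on rows and columns $1$ and $m+1$.

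Next I would translate: set $u_k(x):=u(x+x_k)$ and $U_k(x):=U(x+x_k)=u_0(z(x+x_k))$. By \eqref{bounded4} the $u_k$ are bounded in $C^4$ on every fixed ball uniformly in $k$, so along a subsequence $u_k\to u_\infty$ in $C^2_{\mathrm{loc}}(\R^{2m})$ with $-\Delta u_\infty=f(u_\infty)$; in particular $D^2_xu_k(0)\to D^2_xu_\infty(0)$. Passing to a further subsequence, let $z_k:=(s_k-t_k)/\sqrt2\to z_\infty\in[0,+\infty]$. To identify $u_\infty$ I use that the cone flattens: when $z_\infty<\infty$ both $s_k,t_k\to\infty$ (recall $t_k=s_k-\sqrt2 z_k$), so on each fixed ball $|x^1+x_k^1|-s_k\to x_1$ and $|x^2+x_k^2|-t_k\to x_{m+1}$ in $C^2$, whence $z(\cdot+x_k)\to\phi_\infty$ in $C^2_{\mathrm{loc}}$, where $\phi_\infty(x):=z_\infty+\tfrac1{\sqrt2}(x_1-x_{m+1})$ is affine with $\nabla\phi_\infty=\tfrac1{\sqrt2}(e_1-e_{m+1})$ and $|\nabla\phi_\infty|\equiv1$; when $z_\infty=+\infty$ instead $z(\cdot+x_k)\to+\infty$ locally uniformly. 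Hence $U_k\to u_0\circ\phi_\infty$ in $C^2_{\mathrm{loc}}$ (resp. $U_k\to1$ locally uniformly), and since $u_k-U_k\to0$ locally uniformly by Theorem~\ref{exis-uns}(b), we get $u_\infty=u_0\circ\phi_\infty$ (resp. $u_\infty\equiv1$).

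It remains to read off the second derivatives. From $u_k\to u_\infty$ in $C^2_{\mathrm{loc}}$ and $u_\infty=u_0\circ\phi_\infty$ we obtain $D^2_xu_k(0)\to\ddot u_0(z_\infty)\,\nabla\phi_\infty\otimes\nabla\phi_\infty$ (resp. $\to0$); taking the submatrix on rows and columns $1$ and $m+1$ and using $\nabla\phi_\infty=\tfrac1{\sqrt2}(e_1-e_{m+1})$, this yields $D^2_{(s,t)}u(x_k)\to\tfrac{\ddot u_0(z_\infty)}{2}\bigl(\begin{smallmatrix}1&-1\\-1&1\end{smallmatrix}\bigr)$, with the convention $\ddot u_0(+\infty)=0$. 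On the other hand $U_k$ depends on $z$ alone, so explicitly $D^2_{(s,t)}U(x_k)=\tfrac{\ddot u_0(z_k)}{2}\bigl(\begin{smallmatrix}1&-1\\-1&1\end{smallmatrix}\bigr)$, which tends to the same limit because $z_k\to z_\infty$ and $\ddot u_0=-f(u_0)$ is continuous and vanishes at $\pm\infty$. Subtracting, $D^2_{(s,t)}(u-U)(x_k)\to0$, contradicting $|D^2_{(s,t)}(u-U)(x_k)|\ge\delta_0$; this proves \eqref{unifst}. (The same compactness step also re-proves the gradient part of \eqref{unif}, since $u_k-U_k\to0$ in $C^2_{\mathrm{loc}}$ gives $\nabla(u-U)(x_k)\to0$.)

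The step I expect to be the real obstacle is the behaviour near $\{st=0\}$. One cannot obtain \eqref{unifst} by a direct interior Schauder estimate for $w=u-U$ on $\{st>0\}$: there $w$ solves $\Delta w+\frac{f(u)-f(U)}{u-U}\,w=\frac{2(m-1)\,z\,\dot u_0(z)}{y^2-z^2}=\frac{(m-1)\,z\,\dot u_0(z)}{st}$, whose right-hand side blows up as $t\to0^+$ with $z$ (equivalently $s$) bounded, and whose lower-order coefficients degenerate there as well; correspondingly $D^2_xU$, viewed as a function of $x$, is unbounded near $\{st=0\}$ through its $U_t/t$–type term. The argument above circumvents this precisely because $D^2_{(s,t)}$, unlike $D^2_x$, probes only the ``radial–radial'' second derivatives — which stay bounded, by \eqref{bounded4} — and because after translating, the comparison collapses onto the one-dimensional heteroclinic profile $u_0$, for which $D^2_{(s,t)}U$ is explicit and small whenever $z$ is large.
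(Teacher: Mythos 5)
Your proposal is correct, and it reaches the conclusion by a genuinely different route at the key step. Both arguments are contradiction--translation--compactness arguments: one extracts points $x_k$ with $|x_k|\to\infty$ where the estimate fails, translates, and uses the uniform $C^{4}$ bounds \eqref{bounded4} to pass to a limit. The difference is in how the blow-down limit is identified. The paper follows the proof of Theorem~1.6 of \cite{CT2} verbatim: it invokes the stability of $u$ in ${\mathcal O}$ and then Liouville-type theorems --- Aronson--Weinberger when $z_k\to\infty$ (forcing the limit to be $\equiv 1$) and Angenent's half-space uniqueness theorem when $z_k$ stays bounded (forcing the limit to be the 1D profile $u_0$ in the variable orthogonal to the flattened cone). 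You instead identify the limit directly from the already-established zeroth-order asymptotics \eqref{unif}: since $u_k-U_k\to 0$ locally uniformly and $U_k=u_0(z(\cdot+x_k))$ converges explicitly in $C^2_{\rm loc}$ to $u_0\circ\phi_\infty$ (or to $1$) as the cone flattens, the $C^2_{\rm loc}$ limit of $u_k$ is pinned down with no Liouville theorem and no stability discussion. This shortcut is legitimate precisely because Theorem~\ref{exis-uns}(b) is available as a black box here (it was not in \cite{CT2}, where the $C^0$ asymptotics itself was the goal), and it makes the second-order statement a soft consequence of the zeroth-order one plus interior regularity. Your exact identification of $D^2_{(s,t)}(u-U)(x_k)$ with the $(1,m{+}1)$ submatrix of the Euclidean Hessian at $x_k=s_ke_1+t_ke_{m+1}$, and your closing remark on why one must work with $D^2_{(s,t)}$ rather than $D^2_x$ near $\{st=0\}$, are both correct and consistent with the caveat the paper itself makes after the statement of the lemma.
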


Recall that $U$ is a Lipschitz function in all of $\R^{2m}$,
but it is not $C^1$ at $\{st=0\}$.
It is therefore important to take the sup-norm 
of $D^2_{(s,t)} (u-U)$ as a function of the two variables $s$ and $t$, in
$\{st>0, s^2+t^2 \geq R^2\}$ ---which does not contain $\{st=0\}$.

\begin{proof}[Proof of Lemma \ref{lemmast}]
We follow the proof of Theorem~1.6 of \cite{CT2}. It argues by contradicting
\eqref{unif}
---here by contradicting \eqref{unifst}---
and in this way obtaining a sequence of points $\{x_k\}$, with $|x_k|\to\infty$,
for which one of these asymptotics does not hold. By odd symmetry, and taking a subsequence,
one may assume that $\{x_k\}\subset{\mathcal O}$. 

Next, one translates the solution
$u$ to be centered now at $x_k$,
and uses a translation and compactness argument; compactness comes from a priori estimates
and the Arzela-Ascoli theorem. 
The translated solutions converge to 
a solution $v$ in all of $\R^{2m}$ in the $C^4$ uniform convergence in compact sets, 
since any uniformly bounded sequence of solutions is uniformly bounded in
$C^4$ on every compact set, as shown above. 
The points $x_k$ in the proof satisfy
$|x_k|\to\infty$ and now, in addition,
$s_kt_k>0$ ---since we are contradicting the 
$L^{\infty}(\{st>0, s^2+t^2 \geq R^2\})$ convergence.

Now, in case~1 of the proof we have that the distances to the Simons cone
$|z_k|=z_k=(s_k-t_k)/\sqrt{2}\to\infty$ and thus the limiting solution
$v$ is defined in all $\R^{2m}$ and satisfies $0\leq v\leq 1$. By stability
of $u$ in ${\mathcal O}$ we deduce the stability of $v$ in $\R^{2m}$.
Thus, $v\not\equiv 0$ and therefore a Liouville-type theorem 
of Aronson and Weinberger \cite{AW} (see also \cite{BHN} for a more
general version, and \cite{CT2} for the statements) 
guarantees that $v\equiv 1$. Thus $\| D^2_{(s,t)}u(s_k,t_k) \|\rightarrow 0$,
and since $\| D^2_{(s,t)}U(s_k,t_k) \|
=|u_0''(z_k)|\rightarrow 0$ because $z_k\to +\infty$, the proof arrives at a
contradiction. 

Finally, in case~2 of the proof, the points $x_k$
remain at a finite distance of the Simons cone. Since the curvatures of
a cone tend to zero at infinity, in this case the limiting solution
$v$ is nonnegative in a certain limiting half-space $\R_+^{2m}$
and $v$ vanishes at its boundary. By stability again, $v\not \equiv 0$ and hence $v>0$
in the half-space. Then, a Liouville theorem of Angenent \cite{An} 
(see also \cite{CT2} for the statement) gives that $v$ is the 1D solution $u_0$
depending only on the Euclidean variable orthogonal to the boundary of the
half-space. Since $\{z_k\}$ are the distances to the cone and remain bounded, 
in the limit
this solution agrees with $u_0(z)=U(x)$. Hence, the full Hessian
$D^2_x (u-U)(x_k)$ tends to zero.
\end{proof}

We can now give the

\begin{proof}[Proof of Proposition \ref{monot}]
Let $u$ be the saddle-shaped solution of \eqref{eq}. 
Differentiating \eqref{eqst} with respect to $s$ and $t$ we get
\begin{equation}\label{equs}
\Delta u_s +f'(u)u_s -\frac{m-1}{s^2}u_s =0
\quad \text{ in } \R^{2m}\setminus\{s=0\}
\end{equation}
and
\begin{equation}\label{equt}
\Delta u_t +f'(u)u_t 
-\frac{m-1}{t^2}u_t =0 \quad \text{ in } \R^{2m}\setminus\{t=0\}.
\end{equation}

Taking into account \eqref{equs}, 
we apply the maximum principle of 
Proposition~\ref{mpholds} to the 
function $u_s$ in $\Omega:=\{s>t\}={\mathcal O}\subset\R^{2m}$ 
with $c(x):=-(m-1)s^{-2}$, 
a negative continuous function in $\{s>t\}$.
Recall that $u_s$ is $C^{2}$ in all $\R^{2m}$ 
and note that it satisfies $u_s\geq 0$ on $\partial {\mathcal O}=\{s=t\}$
since $u\equiv 0$ on $\{s=t\}$ and $u>0$ in $\{s>t\}$. Furthermore, we have
$\limsup_{x\in{\mathcal O}, |x|\to\infty} u_s(x)\ge 0$, by the
asymptotic result \eqref{unif} and since $U_s(x)=u_0'((s-t)/\sqrt{2})/\sqrt{2}
\geq 0$. We deduce that 
\begin{equation}\label{positus}
u_s\geq 0 \quad\text{ in } {\mathcal O}=\{s>t\}.
\end{equation}

Next, we apply Proposition~\ref{mpholds} in a different subdomain
of  ${\mathcal O}$. We apply it to the equation \eqref{equt} and 
the function $u_t$ in $\Omega:=\{s>t>0\}\subset\R^{2m}$, with $c(x):=-(m-1)t^{-2}$, 
a negative
continuous function in $\{s>t>0\}$.
Note that $u_t\leq 0$ on $\partial \{s>t>0\}=\{s=t\}\cup\{t=0\}$;
here we use \eqref{deriv10}.
We also have
$\limsup_{x\in\{s>t>0\}, |x|\to\infty} u_t(x)\leq 0$, by the
asymptotic result \eqref{unif} and since $U_t(x)=-u_0'((s-t)/\sqrt{2})/\sqrt{2}
\leq 0$. We deduce that 
\begin{equation}\label{positut}
u_t\leq 0 \quad\text{ in } \{s>t>0\}.
\end{equation}

Since $u(s,t)=-u(t,s)$ in all $\R^{2m}$, \eqref{positus} and \eqref{positut}
lead to $u_s\geq 0$ in all $\R^{2m}$. This, the strong maximum principle, and
equation \eqref{equs} finally give
\begin{equation}\label{strpositus}
u_s > 0 \quad\text{ in } \R^{2m}\setminus\{s=0\}.
\end{equation}
Symmetrically, we have
\begin{equation}\label{strpositut}
-u_t > 0 \quad\text{ in } \R^{2m}\setminus\{t=0\}.
\end{equation}
In particular, statement \eqref{signut} of the proposition is now proved.

To establish \eqref{signuy}, using 
$\partial_y=(\partial_s+\partial_t)/\sqrt{2}$, we obtain
\begin{eqnarray*}
\Delta u_y  +f'(u)u_y & =
 & \frac{m-1}{\sqrt{2}}\left(\frac{u_s}{s^2}+
\frac{u_t}{t^2}\right)\\
&  & \hspace{-18mm} =\, \frac{m-1}{s^2}u_y + 
\frac{(m-1)(s^2-t^2)}{\sqrt{2}s^2t^2}u_t
\quad \text{ in } \R^{2m}\setminus\{st=0\}.
\end{eqnarray*}
Thus, by \eqref{strpositut}, we deduce
\begin{equation}\label{strequy}
\Delta u_y  +f'(u)u_y -\frac{m-1}{s^2}u_y \leq 0
\quad \text{ in } {\mathcal O}=\{s>t\}.
\end{equation}
We apply Proposition~\ref{mpholds} to the 
function $u_y$ in $\Omega:={\mathcal O}=\{s>t\}\subset\R^{2m}$ with $c(x):=-(m-1)s^{-2}$.
Note that $u_y\equiv 0$ on $\partial {\mathcal O}=\{s=t\}=\{z=0\}$. 
Furthermore, we have
$\limsup_{x\in{\mathcal O}, |x|\to\infty} u_y(x) = 0$, by the
asymptotic result \eqref{unif} and since $U_y\equiv 0$ in all $\R^{2m}$. 
We deduce that $u_y\geq 0$ in ${\mathcal O}=\{s>t\}$. 
This, the strong maximum principle, and \eqref{strequy} give
$u_y>0$ in ${\mathcal O}=\{s>t\}$, i.e., \eqref{signuy} of the proposition.

It remains to establish \eqref{second}. Differentiating \eqref{equs} with respect
to~$t$ and recalling the expression of the Laplacian in 
$(s,t)$ variables, we obtain
\begin{eqnarray}
\label{equst}
& & \hspace{-20mm}
\Delta u_{st}  +f'(u) u_{st} 
-(m-1) \left(\frac{1}{s^2}+\frac{1}{t^2}\right)  u_{st}  
\nonumber
\\ & = & -f''(u) u_s u_ t 
\nonumber
\\ & \leq  & 0 
\qquad\hspace{1cm} \text{ in } \{s>t>0\}.
\end{eqnarray}

We apply Proposition~\ref{mpholds} to this inequality and to
the $C^2(\R^{2m})$ function $u_{st}$ ---recall \eqref{deriv20}--- in the domain $\Omega:=\{s>t>0\}\subset\R^{2m}$, 
with $c(x):=-(m-1)(s^{-2}+t^{-2})$, a negative
continuous function in $\{s>t>0\}$.
Note that $\partial \{s>t>0\}=\{s=t\}\cup\{t=0\}$ and that $u_{st}=0$ on $\{t=0\}$
by \eqref{deriv20}. In addition, since $u=0$ on $\{s=t\}=\{z=0\}$ we have 
$u_{yy}=0$ on $\{s=t\}=\{z=0\}$. Since $u$ is odd with respect to $z$, we also have
$u_{zz}=0$ on $\{s=t\}=\{z=0\}$. Thus, since
$$
u_{st}=\frac{1}{2}(u_{yy}-u_{zz}),
$$
we deduce that $u_{st}=0$ on $\{s=t\}$. Finally, note that
$$
\limsup_{x\in\{s>t>0\}, |x|\to\infty} u_{st}(x)\geq 0, 
$$
by the asymptotic result \eqref{unifst} and since $U_{st}(x)=(1/2)(U_{yy}-U_{zz})(z)
=-U_{zz}(z)/2=-u_0''(z)/2 =f(u_0(z))/2 \geq 0$ in $\{s>t\}=\{z>0\}$. 
Proposition~\ref{mpholds}
leads to $u_{st}\geq 0$ in $\{s>t>0\}$.
{From} this, \eqref{equst}, and the strong maximum principle, we conclude
the strict sign for $u_{st}$ in $\{s>t>0\}$, as stated in \eqref{second}.
\end{proof}

\section{The supersolution of the linearized equation}

We end up establishing our stability result.

\begin{proof}[Proof of Theorem \ref{stab}]
Let $u$ be the saddle-shaped solution of \eqref{eq} in $\R^{2m}$.
Recall that since $2m\geq 14$, we can take $b>0$ satisfying \eqref{ineqb},
or equivalently \eqref{rangeb}. Let
$$
\varphi := t^{-b}u_s -s^{-b}u_t,
$$
a $C^2$ function in $\{st> 0\}$. By \eqref{strpositus} and \eqref{strpositut},
we have that 
\begin{equation}\label{posivar}
\varphi >0 \qquad\text{ in } \{st> 0\}. 
\end{equation}
Now, since $u(t,s)=-u(s,t)$, one easily verifies that $\varphi (t,s)=\varphi (s,t)$,
i.e., $\varphi$
is even with respect to $z$. Thus $\{\Delta +f'(u)\}\varphi$ is also even with respect to~$z$,
and hence we only need to show that 
$\{\Delta +f'(u)\}\varphi\leq 0$ in $\{s> t >0\}$. 
{From} this we will deduce the same inequality in all $\{st> 0\}$
---as stated in the theorem. Then, at the end of the proof, we will show
that this easily leads to the stability of $u$ in all of $\R^{2m}$.

In $\{s>t>0\}$, we have
$$
\Delta t^{-b}= b(b-m+2)t^{-b-2}\quad\text{ and }\quad
\Delta s^{-b}= b(b-m+2)s^{-b-2}.
$$
Thus, using also \eqref{equs} and \eqref{equt}, in $\{s>t>0\}$
\begin{eqnarray*}
\Delta \varphi & = & b(b-m+2)t^{-b-2}u_s \\
& &  -f'(u)u_st^{-b}+ (m-1)s^{-2}u_st^{-b} -2bt^{-b-1}u_{st}\\
& & - b(b-m+2)s^{-b-2}u_t  \\
& & -\left\{-f'(u)u_t s^{-b} + (m-1)t^{-2}u_t s^{-b} -2bs^{-b-1}u_{st}\right\},
\end{eqnarray*}
and hence
\begin{eqnarray*}
\{\Delta + f'(u)\} \varphi & = & t^{-b} u_s \{ (m-1)s^{-2} + b(b-m+2) t^{-2} \} \\
& &  - s^{-b} u_t \{ (m-1)t^{-2} +  b(b-m+2) s^{-2} \} \\
& & +2b u_{st} \{ s^{-b-1} -t^{-b-1} \}.
\end{eqnarray*}
Now, using that, in $\{s>t>0\}$, $u_{st}>0$, $u_{y}>0$, and $-u_{t}>0$ 
(by Proposition~\ref{monot}),
and also the inequality \eqref{ineqb} for $b>0$, we arrive at
\begin{eqnarray*}
\{\Delta + f'(u)\} \varphi &\leq  & t^{-b} (u_s+u_t) \{ (m-1)s^{-2} + b(b-m+2) t^{-2} \} \\
& &  - s^{-b} u_t \{ (m-1)t^{-2} +  b(b-m+2) s^{-2} \} \\
& & -t^{-b} u_t \{ (m-1)s^{-2} + b(b-m+2) t^{-2} \}\\
& =  &  u_y\; \sqrt{2} t^{-b} \{ (m-1)s^{-2} + b(b-m+2) t^{-2} \} \\
& &  + (-u_t)\;  (m-1)(s^{-b} t^{-2}+ t^{-b} s^{-2}) \\ 
& &  + (-u_t)\; b(b-m+2) (s^{-2-b}+t^{-2-b}) \\ 
& \leq & u_y\;  \sqrt{2} t^{-b} (m-1)\{ s^{-2} - t^{-2} \} \\
& &  + (-u_t)\; (m-1) (s^{-b} t^{-2}+ t^{-b} s^{-2}-s^{-2-b}-t^{-2-b})  \\
&   \leq & (-u_t)\; (m-1) (s^{-b} t^{-2}+ t^{-b} s^{-2}-s^{-2-b}-t^{-2-b})
\end{eqnarray*}
in $\{s>t>0\}$. Finally, since in $\{s>t>0\}$ we have $-u_{t}>0$ and 
\begin{eqnarray*}
s^{-b} t^{-2}+ t^{-b} s^{-2}-s^{-2-b}-t^{-2-b} &=&
s^{-b} (t^{-2}-s^{-2}) + t^{-b} (s^{-2}-t^{-2})\\
&=& (s^{-b} - t^{-b})(t^{-2}-s^{-2}) \leq 0,
\end{eqnarray*}
we conclude $\{\Delta + f'(u)\} \varphi \leq 0$ in $\{s>t>0\}$. Hence,
by even symmetry in~$z$, also 
\begin{eqnarray}\label{superpf}
\{\Delta + f'(u)\} \varphi \leq 0 \qquad\text{ in } \R^{2m}\setminus\{st=0\}=\{st>0\}.
\end{eqnarray}

Next, using \eqref{posivar} and \eqref{superpf}, we can verify the stability
condition for any $C^1$ test function
$\xi = \xi(x)$ with compact support in $\{st>0\}$. Indeed, multiply \eqref{superpf}
by $\xi^2/\varphi$ and integrate by parts to get
\begin{eqnarray*}
\int_{\{st>0\}} f'(u)\,\xi^2 \, dx  
& = &\int_{\{st>0\}} f'(u) \varphi\, \frac{\xi^2}{\varphi} \, dx \\
& \leq & \int_{\{st>0\}} -\Delta \varphi \, \frac{\xi^2}{\varphi}  \, dx \\
& = &\int_{\{st>0\}} 
\nabla \varphi \, \nabla\xi\, \frac{2\xi}{\varphi}  \, dx 
-\int_{\{st>0\}} \frac{|\nabla \varphi|^2}{\varphi^2}\, \xi^2 \, dx .
\end{eqnarray*}
Now, using the Cauchy-Schwarz inequality, we are led to
\begin{equation}\label{stabparts}
\int_{\{st>0\}} f'(u)\,\xi^2 \, dx \leq \int_{\{st>0\}} |\nabla\xi|^2 \, dx .
\end{equation}

Finally, we need to prove this same inequality for every $C^1$ function
$\xi$ with compact support in a ball $B_{R_0}(0)\subset \R^{2m}$. For this, let 
$\eta_{\varepsilon}$ be a smooth function in $[0,\infty)$ with
$0\leq\eta\leq 1$, being identically $0$
in $[0,\varepsilon/2)$ and identically $1$
in $[\varepsilon,\infty)$. Since $\xi(x)\eta_{\varepsilon}(s)\eta_{\varepsilon}(t)$ 
is a $C^1$ function of $x$ with compact support in $\{s\geq\varepsilon/2, t\geq\varepsilon/2\}$,
the stability property just proven gives
$$
\int_{\R^{2m}} f'(u(x))\,\xi^2(x) \eta_{\varepsilon}^2(s)\eta_{\varepsilon}^2(t)\, dx 
\leq \int_{B_{R_0}(0)} 
|\nabla_x \left\{ \xi(x)\eta_{\varepsilon}(s)\eta_{\varepsilon}(t) \right\}
|^2 \, dx .
$$
We now compute all the terms in the right hand side of this inequality
and, using Cauchy-Schwarz, we see that to conclude
$$
\int_{\R^{2m}} f'(u)\,\xi^2 \, dx \leq \int_{\R^{2m}} |\nabla\xi|^2 \, dx 
$$
by letting $\varepsilon\to 0$, it is enough to use that 
\begin{eqnarray*}
\int_{B_{R_0}(0)} |\nabla_x \eta_{\varepsilon}(s)|^2\, dx 
& \leq  & \int_{\{s\leq \varepsilon, t \leq R_0\}} |\nabla_x \eta_{\varepsilon}(s)|^2\, dx \\
& \leq & \int_{\{s\leq \varepsilon, t \leq R_0\}} C\varepsilon^{-2} s^{m-1} t^{m-1}\, ds dt\\
& \leq & C \varepsilon^{m-2} R_0^m \longrightarrow 0 \qquad\text{as } 
\varepsilon\rightarrow 0 
\end{eqnarray*}
since $m\geq 3$ ---and the same for the integral of $|\nabla_x \eta_{\varepsilon}(t)|^2$.
This concludes the proof.
\end{proof}

\end{document}